\documentclass[12pt,a4paper]{article}


\usepackage{amsmath, amsthm, amsfonts, amssymb,color,geometry,enumitem}
\usepackage{graphicx}
\usepackage{fancybox}
\usepackage{cases}
\usepackage{fancyhdr}
\usepackage{graphicx}
\usepackage{overpic}



\geometry{margin=1.5cm}
\pagestyle{plain}

\theoremstyle{definition}


\newtheorem{theorem}{Theorem}[section]
\newtheorem{proposition}[theorem]{Proposition}
\newtheorem{lemma}[theorem]{Lemma}
\newtheorem{definition}[theorem]{Definition}
\newtheorem{remark}[theorem]{Remark}


\makeatletter

\@addtoreset{equation}{section}
\makeatother


\newcommand{\R}{\mathbb{R}}   
\newcommand{\C}{\mathbb{C}}   
\newcommand{\N}{\mathbb{N}}   
\newcommand{\im}{\text{\normalfont Im}}    
\newcommand{\re}{\text{\normalfont Re}}    
\renewcommand{\epsilon}{\varepsilon}    

\newcommand{\dd}{\mathrm{d}} 
\newcommand{\Restr}[2]{{#1}{\restriction}_{#2}} 
\newcommand\FN{\widetilde{F}^{[N]}}  


\begin{document}

\date{}

\title{On the free L\'evy measure of the normal distribution}
\author{Takahiro Hasebe and Yuki Ueda}
\maketitle

\begin{abstract} Belinschi et al.\ \cite{BBLS11} proved that the normal distribution is freely infinitely divisible. 
This paper establishes a certain monotonicity, real analyticity and asymptotic behavior of the density of the free L\'evy measure. The monotonicity property strengthens the result in Hasebe et al.\ \cite{HST19} that the normal distribution is freely selfdecomposable. 
\end{abstract}

\section{Introduction}\label{sec:intro}
\subsection{Backgrounds}
The role of the normal distribution is played by Wigner's semicircle distribution in free probability. Most notably, the latter appears in the free central limit theorem (see e.g.\ \cite{HP00,NS06,VDN92}). Although a role of the normal distribution in free probability is not very obvious, there are still some attempts to understand it.  In \cite{BBLS11} the normal distribution was proven to be freely infinitely divisible, and then, as a stronger result, the normal distribution was proven to be freely selfdecomposable in \cite{HST19}. Combinatorial aspects are also investigated in \cite{BBLS11}.  

This paper further analyzes the free infinite divisibility of the normal distribution. Key analytical machineries are the Cauchy transform, its reciprocals and the Voiculescu transform, defined as follows. The \emph{Cauchy transform} of a probability measure $\mu$ on $\R$ is the function
\[
G_\mu(z)=\int_\R \frac{1}{z-x}\mu(\dd x), \qquad z\in \C\setminus \R.
\]
It is easy to see that $G_\mu$ is analytic and maps the complex upper half-plane (denoted $\C^+$) to the lower half-plane (denoted $\C^-$) and also $\C^-$ to $\C^+$. Note that the Cauchy transform is often defined only on $\C^+$ but in this paper the values on $\C^-$ is also useful, see \eqref{eq:CauchyND}. 
Then the \emph{reciprocal Cauchy transform} of $\mu$ is defined to be 
\begin{equation} \label{eq:F}
F_\mu(z)=\frac{1}{G_\mu (z)}, \qquad z\in \C^+, 
\end{equation}
which is an analytic selfmap of $\C^+$. 

According to \cite[Proposition 5.4]{BV93}, for a probability measure $\mu$ on $\R$ and $\lambda>0$ there exist positive numbers $\alpha,\beta$ and $M$ such that $F_\mu$ is univalent on the set $\Gamma_{\alpha,\beta}:=\{z\in \C^+: \im (z)>\beta, |\re(z)|<\alpha \im(z)\}$ and $\Gamma_{\lambda,M}\subset F_\mu(\Gamma_{\alpha,\beta})$. Therefore, the compositional right inverse $F_\mu^{-1}$ is defined on $\Gamma_{\lambda,M}$. The {\it Voiculescu transform} $\varphi_\mu$ is defined by
\[
\varphi_\mu(z):=F_\mu^{-1}(z)-z,\qquad z\in \Gamma_{\lambda,M}.
\]
For probability measures $\mu$ and $\nu$ on $\R$, the {\it free additive convolution} $\mu\boxplus\nu$ is a unique probability measure satisfying that
\[
\varphi_{\mu\boxplus\nu}(z)=\varphi_\mu(z)+\varphi_\nu(z)
\]
on the intersection of the domains where three Voiculescu transforms are defined.

A probability measure $\mu$ on $\R$ is said to be {\it freely infinitely divisible} if for any $n\in \N$ there exists a probability measure $\mu_n$ on $\R$ such that
\[
\mu=\underbrace{\mu_n\boxplus \cdots \boxplus \mu_n}_{n \text{ times}}.
\]
A basic fact is a characterization of freely infinitely divisible distributions  in terms  of the Voiculescu transform. 
\begin{theorem}[{\cite[Theorem 5.10]{BV93}}]
A probability measure $\mu$ on $\R$ is freely infinitely divisible if and only if the Voiculescu transform $\varphi_\mu$ has an analytic extension defined on $\C^+$ with values in $\C^-\cup \R$.
\end{theorem}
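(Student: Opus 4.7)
My plan is to prove both directions of the equivalence using the Nevanlinna-Pick representation for analytic self-maps of $\C^+$. Two auxiliary facts drive the argument: (i) an analytic self-map $F$ of $\C^+$ is the reciprocal Cauchy transform of a probability measure on $\R$ iff $F(iy)/(iy)\to 1$ non-tangentially as $y\to\infty$; and (ii) any analytic $\psi:\C^+\to\C^-\cup\R$ with $\psi(iy)/(iy)\to 0$ non-tangentially admits a Nevanlinna representation $\psi(z)=\gamma+\int_{\R}\frac{1+xz}{z-x}\,\sigma(\dd x)$ for some $\gamma\in\R$ and finite positive $\sigma$.

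\emph{Sufficiency.} Suppose $\varphi_\mu$ extends analytically to $\C^+$ with values in $\C^-\cup\R$. For each $n\in\N$, the scaled function $\frac{1}{n}\varphi_\mu$ inherits the same property and decay at infinity. The main step is the auxiliary claim that any such $\psi$ arises as the Voiculescu transform of some probability measure. To prove this, set $H(z):=z+\psi(z)$ and observe that the decay $\psi(z)/z\to 0$ at infinity implies $H$ is a univalent self-map of a truncated half-plane $\{\im z>c\}$ for $c$ sufficiently large. Its inverse $F:=H^{-1}$ is thus defined on a region containing a Stolz angle, and a Denjoy--Wolff type continuation together with fact (i) shows that $F$ extends analytically to a self-map of all of $\C^+$ with $F(iy)/(iy)\to 1$. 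Hence $F=F_{\nu}$ for a unique probability measure $\nu$ with $\varphi_\nu=\psi$. Applying this with $\psi=\frac{1}{n}\varphi_\mu$ produces $\mu_n$ satisfying $\varphi_{\mu_n}=\frac{1}{n}\varphi_\mu$, and by additivity of Voiculescu transforms $\mu_n^{\boxplus n}=\mu$, proving free infinite divisibility.

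\emph{Necessity and main obstacle.} Conversely, suppose $\mu=\mu_n^{\boxplus n}$ for all $n$. On the common Stolz angle one has $\varphi_\mu=n\varphi_{\mu_n}$, hence $\varphi_{\mu_n}=\frac{1}{n}\varphi_\mu\to 0$ as $n\to\infty$; consequently $F_{\mu_n}(z)\to z$ locally uniformly on $\C^+$, and the Stolz angles $\Gamma_{\alpha_n,\beta_n}$ on which $\varphi_{\mu_n}$ is naturally defined can be chosen with $\alpha_n\to\infty$ and $\beta_n\to 0$, exhausting $\C^+$. On each compact subset of $\C^+$, the identity $\varphi_\mu=n\varphi_{\mu_n}$ then extends $\varphi_\mu$ analytically for large $n$, and the sign condition $\im\varphi_\mu\le 0$ is inherited from $\im F_{\mu_n}(z)\ge\im z$ under compositional inversion. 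The main analytical obstacle in both directions is the control of inverses of reciprocal Cauchy transforms on the full upper half-plane: specifically, the extension of $F$ from its initial domain of definition in the sufficiency direction, and the uniform exhaustion of $\C^+$ by Stolz angles in the necessity direction; both require careful Julia--Carath\'eodory boundary estimates at infinity.
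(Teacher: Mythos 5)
This theorem is quoted in the paper from Bercovici--Voiculescu [BV93, Theorem 5.10] without proof, so your proposal has to stand on its own. Its architecture --- reduce everything to the claim that every analytic $\psi\colon\C^+\to\C^-\cup\R$ with $\psi(z)=o(z)$ nontangentially is the Voiculescu transform of a probability measure, then use additivity of $\varphi$ --- is the right one. But the central claim of the sufficiency direction is asserted rather than proved, and the one concrete mechanism you offer is incorrect. The nontangential decay $\psi(z)/z\to0$ gives no uniform control as $z\to\infty$ horizontally inside a half-plane $\{\im z>c\}$: from the representation $\psi(z)=\gamma+\int_\R\frac{1+xz}{z-x}\,\sigma(\dd x)$ with $\sigma$ of unbounded support (e.g.\ $\sigma=\sum_n2^{-n}\delta_{2^n}$) one finds $|\psi(2^n+ic)|\asymp 2^n/c$ and $|\psi'(2^n+ic)|\asymp 2^n/c^2$, both unbounded on every truncated half-plane. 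So $H=z+\psi$ is not a small perturbation of the identity on $\{\im z>c\}$, and the claimed univalence there has no justification; the statement that the decay actually yields (and that [BV93, Prop.\ 5.4--5.6] uses) is univalence on truncated \emph{cones} $\Gamma_{\alpha,\beta}$, with image containing a smaller cone. Granting that, the real difficulty only begins: $F:=H^{-1}$ is defined on a cone, and you must show it extends to an analytic self-map of all of $\C^+$ with $F(iy)/(iy)\to1$. ``A Denjoy--Wolff type continuation'' is not an argument here --- Denjoy--Wolff concerns iteration of self-maps and provides no analytic continuation of $F$ beyond $H(\Gamma_{\alpha,\beta})$. This extension is exactly the content of [BV93, Prop.\ 5.6 / Cor.\ 5.8], whose proof passes through the compactly supported case (operator models on the full Fock space) and an approximation of $(\gamma,\sigma)$ by compactly supported data together with continuity of $\boxplus$ under weak limits; some substitute for that machinery is indispensable and is missing from your sketch.

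The necessity direction is closer to correct but also hides a real step: from $\varphi_{\mu_n}=\frac1n\varphi_\mu\to0$ on a \emph{fixed} truncated cone you cannot directly conclude that $F_{\mu_n}\to\mathrm{id}$ locally uniformly on $\C^+$, nor that the cones $\Gamma_{\alpha_n,\beta_n}$ can be chosen to exhaust $\C^+$; one first needs tightness of the convolution roots $\{\mu_n\}_n$ and $\mu_n\to\delta_0$ weakly (the role of [BV93, Lemma 5.9]). Once that is supplied, your deduction of $\im\varphi_\mu\le0$ from $\im F_{\mu_n}(z)\ge\im z$ under compositional inversion is sound.
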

For a freely infinitely divisible distribution $\mu$, let $\widetilde{\varphi}_\mu$ denote the analytic extension of its Voiculescu transform as described above. Then the transform $\widetilde{\varphi}_\mu$ has the following Pick--Nevanlinna representation:
\[
\widetilde{\varphi}_\mu(z)=b_\mu+\int_\R \frac{1+xz}{z-x}\tau_\mu(\dd x), \qquad z\in \C^+,
\]
for some $b_\mu\in \R$ and a finite measure $\tau_\mu$ on $\R$. The pair $(b_\mu, \tau_\mu)$ is unique. The measure 
\begin{equation}
\nu_\mu(\dd x) = \frac{1+x^2}{x^2}\mathbf1_{\R\setminus\{0\}}(x) \,\tau_\mu(\dd x)   \label{eq:freeLM}
\end{equation}
is called the \emph{free L\'evy measure} of $\mu$ and the mass of $\tau_\mu$ at zero is called the \emph{semicircular component}. The standard semicircle distribution (i.e.\ with mean 0 and variance 1) corresponds to $(b_\mu,\tau_\mu) = (0,\delta_0)$. 

For many classical distributions including the normal distribution,  its Voiculescu transform cannot be explicitly calculated. In such a case, the following condition has been a useful sufficient (but not necessary, see \cite[Proposition 3.6]{AH13b} for a counterexample) condition for proving the free infinite divisibility. 
\begin{definition}[{\cite[Definition 5.1]{AH13}}]  \label{def:UI}
A probability measure $\mu$ on $\R$ is said to be in class ${\bf UI}$ (denoted by $\mu\in {\bf UI}$) if $F_\mu^{-1}$, defined in some $\Gamma_{\lambda,M}$, analytically extends to a univalent map in $\C^+$, or  equivalently, if there exists a domain $ \C^+ \subset \Omega \subset \C$ such that $F_\mu$ extends to an analytic bijection from $\Omega$ onto $\C^+$.
\end{definition}

\begin{proposition}[{\cite[Proposition 5.2]{AH13}}]
If $\mu \in {\bf UI}$, then $\mu$ is freely infinitely divisible.
\end{proposition}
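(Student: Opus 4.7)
The plan is to verify the Bercovici--Voiculescu criterion stated in the previous theorem by producing an analytic extension of $\varphi_\mu$ to $\C^+$ taking values in $\C^-\cup\R$. The extension itself is essentially built into the $\mathbf{UI}$ hypothesis: writing $\widetilde{F}_\mu^{-1}$ for the univalent extension of $F_\mu^{-1}$ to $\C^+$, I would simply set $\widetilde{\varphi}_\mu(z):=\widetilde{F}_\mu^{-1}(z)-z$. All the substance then lies in verifying $\im \widetilde{\varphi}_\mu(z)\leq 0$ throughout $\C^+$.

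The key analytic input I would invoke is the Nevanlinna-type inequality $\im F_\mu(w)\geq \im w$ for every $w\in\C^+$. This is a standard consequence of the Nevanlinna representation of the self-map $F_\mu:\C^+\to\C^+$ together with the normalization $F_\mu(z)=z+O(1)$ along $z=iy$, $y\to\infty$; unpacking the representation quickly shows that $\im F_\mu(w)-\im w$ equals an integral of the nonnegative kernel $(1+t^2)/|t-w|^2$ against a finite positive Borel measure on $\R$, and hence is nonnegative.

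The conclusion follows from a short case analysis based on the second formulation of $\mathbf{UI}$. Let $\Omega\supset\C^+$ be a domain on which $F_\mu$ extends to an analytic bijection onto $\C^+$. Given $z\in\C^+$, set $w:=\widetilde{F}_\mu^{-1}(z)\in\Omega$. If $w\in\C^+$, then $F_\mu(w)=z$ and the Nevanlinna inequality forces $\im w\leq\im F_\mu(w)=\im z$, so $\im\widetilde{\varphi}_\mu(z)\leq 0$. If instead $w\in\Omega\setminus\C^+$, then trivially $\im w\leq 0<\im z$, and again $\im\widetilde{\varphi}_\mu(z)<0$. Either way $\widetilde{\varphi}_\mu(\C^+)\subset\C^-\cup\R$, and the preceding theorem delivers free infinite divisibility.

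I do not anticipate a genuine obstacle. The only point requiring a moment's care is the complementary case $w\in\Omega\setminus\C^+$, where one must observe that this set automatically lies in the closed lower half-plane; this is tautological, since $\C^+$ is by definition the open upper half-plane. The rest of the argument is a marriage of the Nevanlinna inequality (a ready-made fact about reciprocal Cauchy transforms) with the global bijectivity packaged into the definition of $\mathbf{UI}$.
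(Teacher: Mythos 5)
Your argument is correct and is essentially the standard proof of this cited result (the paper itself states it as a quotation from [AH13, Proposition 5.2] without reproving it): the univalent extension of $F_\mu^{-1}$ supplies the analytic extension of $\varphi_\mu$, and the Nevanlinna-type inequality $\im F_\mu(w)\ge \im w$ on $\C^+$, combined with the trivial bound on $\Omega\setminus\C^+$, gives $\im\widetilde{\varphi}_\mu\le 0$, so Theorem 1.1 applies. No gaps.
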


Many classical distributions, despite their unexplicit Voiculescu transforms,  have been proven to be in class {\bf UI}. They include the normal distribution \cite{BBLS11}, some beta distributions and some gamma distributions \cite{Has14} and some HCM distributions \cite{Has16}; see e.g.\ \cite{BBLS11,AH16,BH13,Has14,Has16,MU20} for further examples. On the other hand, little is known about free L\'evy measures of these distributions; most of the former results in the literature were limited to the existence of $\Omega$. For the normal distribution, it is nonetheless shown in \cite{HST19} that the free L\'evy measure of $N(0,1)$ is of the form
\begin{equation} \label{eq:FSD}
\frac{k(x)}{|x|} \mathbf1_{\R\setminus\{0\}}(x)\,\dd x, 
\end{equation}
where $k$ is nondecreasing on $(0,\infty)$ and is non-increasing on $(-\infty,0)$, i.e., $N(0,1)$ is freely selfdecomposable. The fact that $N(0,1)$ is symmetric also implies that $k$ is an even function. The aim of this paper is to clarify further properties of the function $k$. 

\subsection{Main results and the outline of proofs}\label{sec:main}

The main result of this paper is the following two theorems on the free L\'evy measure of $N(0,1)$. 

\begin{theorem}[Analyticity and monotonicity]\label{thm:freeLevy}
The free L\'{e}vy measure of $N(0,1)$ is of the form 
\begin{equation} \label{eq:FLM_normal}
\frac{1}{\pi x^{2}} h(|x|) \mathbf{1}_{\R\setminus\{0\}}(x)\,\dd x,  
\end{equation}
 where $h\colon (0,\infty) \to (0,\infty)$ is a real analytic function with $h' <0$ on $(0,\infty)$. 
\end{theorem}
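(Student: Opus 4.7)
The plan is to translate the claimed properties of $h$ into corresponding properties of the boundary trace
\[
w(x):=\lim_{y\to 0^+}\widetilde{F}_\mu^{-1}(x+iy),\qquad x>0,
\]
and then exploit the specific ODE the normal density forces on the Cauchy transform. A direct computation from the Pick--Nevanlinna representation of $\widetilde{\varphi}_\mu$ gives
\[
\im\bigl(\widetilde{\varphi}_\mu(u+iv)\bigr)=-v\int_\R\frac{1+t^2}{(u-t)^2+v^2}\,\tau_\mu(\dd t),
\]
so Stieltjes inversion combined with \eqref{eq:freeLM} yields $\nu_\mu(\dd x)=-\frac{1}{\pi x^2}\im(\widetilde{\varphi}_\mu(x+i0))\,\dd x$ on $\R\setminus\{0\}$. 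Since $\widetilde{\varphi}_\mu(x+i0)=w(x)-x$, this reads $h(x)=-\im(w(x))$ for $x>0$, so the problem reduces to proving that $x\mapsto w(x)$ is real analytic, strictly lies in $\C^-$, and has strictly positive imaginary-part derivative on $(0,\infty)$.

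The decisive analytic input specific to $N(0,1)$ is the first-order ODE $G_\mu'(z)+zG_\mu(z)=1$, which follows from the Hermite-type identity $\phi'(t)=-t\phi(t)$ for $\phi(t)=e^{-t^2/2}/\sqrt{2\pi}$ by integration by parts. In particular $G_\mu$ extends to an entire function, and dividing by $G_\mu^2$ converts the ODE into the Riccati identity
\[
F_\mu'(z)=F_\mu(z)\bigl(z-F_\mu(z)\bigr).
\]
Evaluated at $w=w(x)$ with $F_\mu(w)=x$ this gives the key identity $F_\mu'(w(x))=x(w(x)-x)$. Since $\Omega\supset\C^+$, one has $w(x)\in\partial\Omega\subset\overline{\C^-}$; if $w(x)$ were real, then the entire extension of $G_\mu$ would satisfy $\im(G_\mu(w(x)))=-\sqrt{\pi/2}\,e^{-w(x)^2/2}<0$, forcing $F_\mu(w(x))\in\C^+$ and contradicting $F_\mu(w(x))=x\in\R$. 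Hence $v(x):=\im(w(x))<0$ strictly, which immediately gives $h(x)>0$.

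Real analyticity of $h$ on $(0,\infty)$ follows from the Riccati identity: since $\im(F_\mu'(w(x)))=x\,v(x)<0$, the implicit function theorem extends $w$ complex analytically to a complex neighborhood of each $x>0$, whence $h=-\im(w)$ is real analytic. For monotonicity, differentiating $F_\mu(w(x))=x$ and substituting the Riccati formula gives
\[
w'(x)=\frac{1}{x\bigl(w(x)-x\bigr)};
\]
writing $w(x)-x=a(x)+iv(x)$ and taking imaginary parts yields
\[
v'(x)=\frac{-v(x)}{x\bigl(a(x)^2+v(x)^2\bigr)}>0,
\]
since $x>0$ and $v(x)<0$. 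Therefore $h'(x)=-v'(x)<0$ on $(0,\infty)$, completing the proof of Theorem~\ref{thm:freeLevy}.

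The genuinely delicate step in this plan is the boundary behaviour of $\widetilde{F}_\mu^{-1}$: one must know that the limit defining $w(x)$ exists, depends continuously on $x>0$, and is compatible with evaluating $F_\mu$ via its entire-$G_\mu$ extension. This is a Carath\'eodory-type boundary-extension statement for the conformal bijection $\widetilde{F}_\mu^{-1}\colon\C^+\to\Omega$, and it requires the explicit topological description of $\Omega$ from the BBLS11 analysis of $N(0,1)$ rather than only the abstract $\ui$ property; once this is available, the Riccati identity performs the rest of the work.
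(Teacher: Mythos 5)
Your core mechanism is exactly the paper's: Stieltjes inversion of the Pick--Nevanlinna representation reduces everything to $h(x)=-\im(w(x))$, and the Riccati identity $\widetilde F'(z)=\widetilde F(z)(z-\widetilde F(z))$, transported to the inverse via $w'(x)=1/(x(w(x)-x))$, gives both $h'<0$ and (nicely, via $\im[\widetilde F'(w(x))]=xv(x)\neq 0$ plus the inverse function theorem) real analyticity. Your argument that $w(x)\notin\R$ using $\im[\widetilde G]=-\sqrt{\pi/2}\,e^{-x^2/2}<0$ on $\R$ is also sound and matches the paper's use of \eqref{eq:Stieltjes}.

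However, the step you flag as ``genuinely delicate'' and then outsource is a real gap, and your proposed source for it is wrong. You assert that the existence, continuity, and compatibility of the boundary trace $w(x)=\lim_{y\to0^+}\widetilde F^{-1}(x+iy)$ with the entire extension of $G$ follows from ``the explicit topological description of $\Omega$ from the BBLS11 analysis of $N(0,1)$.'' The paper states explicitly (Remark \ref{rem:UI}) that $\Omega$ was \emph{not} investigated in \cite{BBLS11}: there, $N(0,1)\in\mathbf{UI}$ is obtained only by a weak-limit argument from the Askey--Wimp--Kerov measures $\mu_c$, $c\in(-1,0)$, with no description of $\partial\Omega$ for $c=0$. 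Supplying this description is the bulk of the paper's work: one must first isolate a domain $\Xi$ (bounded by the curves $r^2\sin 2\theta=\pm\pi$) on which $\widetilde G$ has no zeros (Lemma \ref{cor:F}), then show by an argument-principle/winding-number count that for each $x>0$ there is a \emph{unique} preimage $H(x)$ of $x$ in $\Xi\cap\C^-\cap\{\re(z)>0\}$ (Proposition \ref{prop:curve}) --- outside $\Xi$ the level set $\widetilde F^{-1}(\R\setminus\{0\})$ has additional irrelevant components, so ``the'' boundary trace is not well defined without this localization --- and finally prove via the Darboux and Carath\'eodory theorems that $\widetilde F$ is a bijection from $\Omega$ onto $\C^+$ extending to a homeomorphism of closures (Theorem \ref{thm:omega}). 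Without these steps your $w(x)$ is not known to exist, to be unique, or to satisfy $\widetilde F(w(x))=x$ for the entire extension of $G$, and the Riccati computation has nothing to act on. So the proposal is the right strategy but is missing the construction that carries the actual weight of the proof.
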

The function $h$ will be defined in \eqref{eq:gh}; it describes the height of the boundary of the domain $\Omega$ introduced in Definition \ref{def:UI} for the normal distribution $N(0,1)$.  Therefore, studying the free L\'evy measure is closely related to studying the boundary of  $\Omega$. 

Theorem \ref{thm:freeLevy} readily reproduces the known fact \eqref{eq:FSD}, because $h'<0$ implies $k'<0$, where $k(x) := x^{-1} h(x)/\pi$.  
An interesting analogous fact is that the Boolean L\'evy measure of $N(0,1)$ is also of the form $x^{-2} \tilde{h}(|x|)\,\dd x$, where $\tilde{h}$ is real analytic with negative derivative on $(0,\infty)$, see \cite[Proposition 4.2]{HNSU}. 

We then clarify the asymptotic behavior of the function $h$ of the normal distribution at zero and at infinity. 

\begin{theorem}[Asymptotic behaviors of $h$] \label{thm:fine_asymptotics_main} The function $h$ in Theorem \ref{thm:freeLevy} fullfills: 

\begin{enumerate}[label=\rm(h${}_\infty$),leftmargin=1.2cm]

\item\label{item:h(x)_infty_main} $\displaystyle h (x) = \frac{1}{e} \sqrt{\frac{\pi}{2}}x^2 e^{-\frac{x^2}{2} } (1 + O(x^{-2}))$\quad  as \quad $x\to\infty$; 
\end{enumerate}
\begin{enumerate}[label=\rm(h${}_0$),leftmargin=1.2cm]
\item \label{item:h(x)_zero_main} $\displaystyle 
h(x) = \sqrt{\log \frac1{\sqrt{2\pi}\, x} + \sqrt{ \left(\log \frac1{\sqrt{2\pi}\, x}\right)^2+ \frac{\pi^2}{4} }} + O(x^\eta)  \quad \text{as} \quad x\to0^+ \quad \text{for any } 0<\eta <1.
$
\end{enumerate}
\end{theorem}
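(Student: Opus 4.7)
The plan is to use the explicit formula
\[
G(z) = -i\sqrt{\pi/2}\,e^{-z^2/2}\,\mathrm{erfc}(-iz/\sqrt 2),
\]
which extends the Cauchy transform of $N(0,1)$ to an entire function of $z$, together with the parameterization $\partial\Omega\cap\C^-=\{g(v)-ih(v):v\in\R\}$ arising from $F_\mu^{-1}$; by the discussion around \eqref{eq:freeLM}, the $h$ in the theorem statement is exactly $-\im F_\mu^{-1}(v+i0)$, so both asymptotics reduce to analyzing the implicit equation $G(g-ih)=1/v$. Using $\mathrm{erfc}(w)+\mathrm{erfc}(-w)=2$ I split
\[
G(z) = J(z) - i\sqrt{2\pi}\,e^{-z^2/2},\qquad J(z) := i\sqrt{\pi/2}\,e^{-z^2/2}\,\mathrm{erfc}(iz/\sqrt 2),
\]
where $J$ carries the algebraic moment asymptotic $J(z)\sim 1/z+1/z^3+\cdots$, while the ``Stokes'' term $-i\sqrt{2\pi}\,e^{-z^2/2}$ is exponentially small or exponentially large depending on the sign of $\re(z^2)$.

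For $(\mathrm{h}_\infty)$, as $v\to\infty$ we expect $h\to 0^+$ and $g\to\infty$ with $z$ hugging the positive real axis. Taylor-expanding $J$ in $h$ around $g$ (using the identity $J'(z)=1-zJ(z)$, which follows by direct differentiation of the defining formula) and separating real and imaginary parts of $G(g-ih)=1/v$, the imaginary equation gives $h/g^2=\sqrt{\pi/2}\,e^{-g^2/2}(1+O(g^{-2}))$ and the real equation gives $1/v=1/g+1/g^3+O(g^{-5})$, hence $g=v+v^{-1}+O(v^{-3})$. The factor $1/e$ then arises because $g^2=v^2+2+O(v^{-2})$, so
\[
e^{-g^2/2} = e^{-1}\,e^{-v^2/2}\,(1+O(v^{-2})),
\]
and multiplying by $g^2=v^2(1+O(v^{-2}))$ delivers $h(v)=e^{-1}\sqrt{\pi/2}\,v^2 e^{-v^2/2}(1+O(v^{-2}))$.

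For $(\mathrm{h}_0)$, as $v\to 0^+$ the value $G(z)=1/v$ is a large positive real, which forces $z\in\C^-$ into the region $h^2>g^2$ where the Stokes term dominates $J$. Setting $-i\sqrt{2\pi}\,e^{-(g-ih)^2/2}=1/v$ and matching real and imaginary parts on the principal branch yields $gh=\pi/2$ and $h^2-g^2=2L$, where $L:=\log(1/(\sqrt{2\pi}\,v))$. Eliminating $g=\pi/(2h)$ produces $h^4-2Lh^2-\pi^2/4=0$, whose positive root $h^2=L+\sqrt{L^2+\pi^2/4}$ is precisely the leading term in the theorem. To sharpen the error to $O(v^\eta)$ I reinsert $J(z)$ perturbatively in $-i\sqrt{2\pi}\,e^{-z^2/2}=1/v-J(z)$: on the relevant curve $|z|\asymp h\asymp\sqrt L$, so $J(z)=O(1/\sqrt L)$, and one perturbation step yields a correction to $h^2$ of size $O(v/\sqrt L)$ and hence to $h$ of size $O(v/L)$, which is $O(v^\eta)$ for every $\eta<1$.

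The hardest part I anticipate is the rigorous treatment of $(\mathrm{h}_0)$: one first has to identify the correct branch (singling out $gh=\pi/2$ among the family $gh\in\pi/2+2\pi\Z$, with appropriate signs) using the global geometry of $\Omega$ recorded in Theorem \ref{thm:freeLevy}; second, the perturbation step requires an implicit-function argument in a regime where the Jacobian of $(g,h)\mapsto G(g-ih)$ mixes polynomial and exponential scales, so that the $O(v/\sqrt L)$ remainder in $G$ really propagates to the same-order remainder in $h$. In $(\mathrm{h}_\infty)$ the analogous concern is that uncontrolled terms in the $\mathrm{erfc}$ asymptotic, living on the exponential scale $e^{-g^2/2}$, could a priori contribute at the same order as the main correction; ruling this out uses the full sector of validity of the asymptotic expansion.
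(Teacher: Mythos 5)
Your proposal is correct and follows essentially the same route as the paper: your split $G(z)=J(z)-i\sqrt{2\pi}\,e^{-z^2/2}$ is exactly \eqref{eq:CauchyND}, the factor $1/e$ in (h${}_\infty$) arises from $g(v)=v+v^{-1}+O(v^{-3})$ exactly as in Theorem \ref{thm:fine_asymptotics}, and (h${}_0$) is obtained by solving $\sqrt{2\pi}\,e^{(h^2-g^2)/2}=1/v$ together with $gh\to\pi/2$ near $\partial\Xi$, exactly as in Theorem \ref{thm:main_infty}. The only differences are presentational: where you invoke implicit-function/perturbation arguments, the paper controls the remainders by sandwiching (the comparison curves $f_{\pm c}$ for (h${}_\infty$), and test points at height $-\tfrac{\pi}{2x}(1-\epsilon)$ for (f${}_0$)), and the branch $gh=\pi/2$ that worries you is singled out automatically by the a priori localization $H(x)\in\Xi$ (i.e.\ $0<gh<\pi/2$) from Proposition \ref{prop:curve} and Lemma \ref{cor:F}.
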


\begin{remark}
Moreover, \ref{item:h(x)_infty_main} can be enhanced to the asymptotic expansion
\[
\displaystyle h (x) = \frac{1}{e} \sqrt{\frac{\pi}{2}}x^2 e^{-\frac{x^2}{2} } \left(1 - \frac{5}{2x^2}  -\frac{43}{8x^4} - \frac{579}{16 x^6}  - \cdots\right), \qquad x\to\infty,    
\]
 see Remark \ref{rem:asymptotic} for further details. 
\end{remark}

Proofs of the above two theorems are sketched here. 
First we construct an entire analytic continuation $\widetilde G$ of the Cauchy transform of $N(0,1)$  (Subsection \ref{subsec:entire}).  
Then we introduce a crucial supplementary domain $\Xi \subseteq \C$ on which the reciprocal Cauchy transform $\widetilde F:= 1/ \widetilde G$ is analytic  (Lemma \ref{cor:F}; note that $\widetilde F$ is a meromorphic function on $\C$). Moreover,  a simply connected domain $\Omega$ for the normal distribution as in Definition \ref{def:UI} exists as a subset of $\Xi$  (Theorem \ref{thm:omega}).  
In the construction of $\Omega $, we first identify its boundary set $\partial\Omega$ with the preimage of $\R \setminus \{0\}$ by the map $\Restr{\widetilde F}{\Xi}$ (Proposition \ref{prop:curve}). (If we go outside of $\Xi$, then the preimage of $\R \setminus \{0\}$ seems to have irrelevant connected components, see Figure \ref{fig1} below.)   This argument also implies the analyticity of $\partial\Omega$.  

A key fact is that $\partial \Omega$ is a graph of a function.  The height (from the real line) of the boundary curve  $\partial\Omega$ can be described by the function $h(x) := -\im[(\Restr{\widetilde F}{\text{cl}(\Omega)})^{-1}(x)], x>0$, which is exactly the function appearing in Theorem \ref{thm:freeLevy}. 
According to \cite[(8.1.6)]{Kerov} or \cite[(3.5)]{BBLS11}, the following ODE is satisfied by $F_{N(0,1)}$:
\begin{align}\label{eq:ODE}
F_{N(0,1)}'(z)= F_{N(0,1)}(z)(z- F_{N(0,1)}(z)), \qquad z\in \C^+. 
\end{align} 
By analytic continuation, this formula holds for $\widetilde F$ on $\Xi$ too. Going to the inverse map, a system of ODEs for $h(x)$ (and for $g(x):=\re[(\Restr{\widetilde F}{\text{cl}(\Omega)})^{-1}(x)]$) can be deduced.
The monotonicity of $h$ in Theorem \ref{thm:freeLevy} is an easy consequence of these ODEs (Proposition \ref{prop:gh}).  Formula \eqref{eq:FLM_normal} easily follows from the Stieltjes inversion (Section \ref{sec:freeLevy}). 

Considering the above, investigating the curve $\partial \Omega$ in further details will reveal fine properties of the free L\'evy measure, which results in Theorem \ref{thm:fine_asymptotics_main}. The two asymptotics \ref{item:h(x)_infty_main} and \ref{item:h(x)_zero_main} will be separately proved in expanded forms (providing finer descriptions  of $\partial \Omega$) as Theorems \ref{thm:fine_asymptotics} and \ref{thm:main_infty}, respectively. 
The proof  does not use the ODE by contrast to Theorem \ref{thm:freeLevy}.

The method for proving \ref{item:h(x)_infty_main} is based on asymptotic analyses of the reciprocal Cauchy transform and of its inverse function at infinity on a region $|\arg z| <\epsilon$. As a basis, the Laurent series asymptotic expansions of $\widetilde F$ and its inverse are  obtained in Lemmas \ref{lem:asymptotic_Cauchy} and \ref{lem:F^{-1}(z)_infty}, respectively.  In addition, we need to estimate an exponential decay of $\im[\widetilde F]$, which is invisible in the Laurent series expansion (Proof of Theorem \ref{thm:fine_asymptotics}). This decay is inherited from the tail behavior of the probability density function.  The whole method seems to be applicable to a wider class of freely infinitely distributions with unbounded support. 

The proof of \ref{item:h(x)_zero_main} is based on formula \eqref{eq:CauchyND} for $\widetilde G$. As the curve $\partial \Omega$ goes to infinity as it approaches the negative imaginary axis (cf.~Figure~\ref{fig1}), the contribution of $G(z)$ in formula  \eqref{eq:CauchyND} is negligible because of its order $O(1/z)$ on $\partial \Omega$ (and because the remaining term $-  i \sqrt{2\pi} e^{-\frac{z^2}{2}}$ goes to infinity).  The point $(\Restr{\widetilde F}{\text{cl}(\Omega)})^{-1}(x)$ is given as a unique solution $z\in \Xi$ to the equation $\widetilde F(z)=x$ that reads, for small $x>0$, $-  i \sqrt{2\pi} e^{-\frac{z^2}{2}} +O(1/z) =1/x$. A detailed analysis of this equation yields the asymptotic behavior of $h(x)= - \im(z) =-\im[(\Restr{\widetilde F}{\text{cl}(\Omega)})^{-1}(x)]$ claimed in $(h_0)$. A key observation in this analysis is that $\partial \Omega$ can be well approximated by the curve $\partial\Xi$ near the imaginary axis, cf.~Figure~\ref{fig1}.

\section{The Cauchy transform of the normal distribution}\label{sec:cauchy}

In this section, we analyze the analytic continuation of the Cauchy transform and its reciprocal, and then describe the boundary of $\Omega$ as a graph of an analytic function, where  $\Omega$ is the domain appearing in Definition \ref{def:UI} for $N(0,1)$. 

\subsection{Entire analytic continuation of the Cauchy transform} \label{subsec:entire}

We simplify the notation of the Cauchy transform of the normal distribution into
\begin{equation}\label{eq:cauchy}
G (z):= G_{N(0,1)}(z) =  \int_{-\infty}^\infty \frac{1}{z-x} \cdot \frac{1}{\sqrt{2\pi}} e^{-\frac{x^2}{2}}\dd x, \qquad z \in \C\setminus\R. 
\end{equation}
A well known fact is that the Cauchy transform $\Restr{G}{\C^+}$ has an analytic continuation to $\C$ (denoted by $\widetilde{G}$) and, on the lower half-plane, the formula 
\begin{align}\label{eq:CauchyND}
\widetilde{G}(z)=G(z)- 2\pi i \cdot \frac{1}{\sqrt{2\pi}} e^{-\frac{z^2}{2}}, \qquad z\in \C^-
\end{align}
holds, see e.g.\ \cite[Theorem 1.2]{Gre60}. On the other hands, due to \cite[p.~362]{Gre60} and the identity theorem, we have
\begin{align}\label{eq:CauchyND2}
\widetilde{G}(z)=e^{-\frac{1}{2}z^2}\left[-i\sqrt{\frac{\pi}{2}}+\sqrt{2} \int_0^{z/\sqrt{2}} e^{t^2}\dd t \right], \qquad z\in \C.
\end{align}
In particular, we have
\begin{equation} \label{eq:Stieltjes}
\im[\widetilde{G}(x)] = - \sqrt{\frac{\pi}{2}}e^{-\frac{x^2}{2}} <0,  \qquad x\in \R,  
\end{equation}
which can also be deduced from the Stieltjes inversion formula. 

Obviously, the reciprocal Cauchy transform $F_{N(0,1)} = 1/G_{N(0,1)}$ analytically extends to the meromorphic function on $\C$
\[
\widetilde{F}(z) := \frac{1}{\widetilde{G}(z)}. 
\]
In view of \eqref{eq:F} and \eqref{eq:Stieltjes}, poles of $\widetilde{F}$ do not exist in $\C^+ \cup \R$. It seems that $\widetilde{F}$ has poles on $\C^-$, see Figure \ref{fig3} below; however, we mostly work on $\widetilde{F}$ in subdomains where $\widetilde{F}$ turns out to have no poles, so that analysis of poles will be rather out of scope of this paper. 

\subsection{Behavior of the Cauchy transform on extended domains}

As preparatory steps, we investigate the behavior of $\widetilde G$ and $\widetilde F$ on the imaginary axis (Lemma \ref{lem:F(ix)}), asymptotic behavior as  $z\to \infty,  - (1/4)\pi + \epsilon < \arg z < (5/4)\pi - \epsilon$ (Lemma \ref{lem:asymptotic_Cauchy} and Lemma \ref{lem:F-transform}) and then their behaviors  on a domain $\Xi$  (Lemma \ref{cor:F}). 

The next fact corresponds to the exceptional case $c=0$ excluded in \cite[Lemma 3.6]{BBLS11}. 
\begin{lemma}\label{lem:F(ix)}
\begin{enumerate}[label=\rm(\arabic*),leftmargin=1cm]
\item $\displaystyle \lim_{\substack{x\in \R\\ x\rightarrow\infty}} \widetilde{F}(ix)=i\infty$\quad and \quad$\displaystyle \lim_{\substack{x\in \R\\ x\rightarrow-\infty}} \widetilde{F}(ix)=i0$.
\item $\widetilde{F}$ is a bijection from $i\R$ onto $i(0,\infty)$.
\end{enumerate}
\end{lemma}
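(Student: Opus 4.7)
The plan is to reduce the lemma to standard facts about the Gaussian Mills ratio. Substituting $z=iy$ into formula \eqref{eq:CauchyND2} and parametrizing the straight-line integral as $t=is$ yields
\[
\sqrt{2}\int_{0}^{iy/\sqrt{2}} e^{t^{2}}\,\dd t = i\sqrt{2}\int_{0}^{y/\sqrt{2}} e^{-s^{2}}\,\dd s;
\]
then using $\sqrt{2}\int_{0}^{\infty} e^{-s^{2}}\,\dd s = \sqrt{\pi/2}$ and the change of variable $u=s\sqrt{2}$ collapses everything to the compact identity
\[
\widetilde{G}(iy) = -i\,M(y),\qquad \widetilde{F}(iy) = \frac{i}{M(y)},\qquad y\in\R,
\]
where $M(y) := e^{y^{2}/2}\int_{y}^{\infty} e^{-u^{2}/2}\,\dd u$ is the standard Mills ratio. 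Since $M(y)>0$, this already shows that $\widetilde{F}$ maps $i\R$ into $i(0,\infty)$ and that the real part of $\widetilde{F}(iy)$ vanishes.

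For part (1), the limits follow from classical estimates on $M$. As $y\to+\infty$, the bound $\int_{y}^{\infty} e^{-u^{2}/2}\,\dd u \le (1/y)\int_{y}^{\infty} u e^{-u^{2}/2}\,\dd u = e^{-y^{2}/2}/y$ gives $M(y)<1/y\to 0$, whence $\widetilde{F}(iy)\to i\infty$. As $y\to-\infty$, $\int_{y}^{\infty}e^{-u^{2}/2}\,\dd u\to\sqrt{2\pi}$ while $e^{y^{2}/2}\to\infty$, so $M(y)\to\infty$ and $\widetilde{F}(iy)\to 0$ through values in $i(0,\infty)$, i.e.\ $\widetilde{F}(iy)\to i\cdot 0$.

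For part (2), strict monotonicity of $M$ closes the loop. Differentiating under the integral yields $M'(y) = yM(y)-1$. The bound $M(y)<1/y$ for $y>0$ noted above gives $M'(y)<0$ on $(0,\infty)$, while for $y\le 0$ we have $yM(y)\le 0<1$, so again $M'(y)<0$. Thus $M\colon\R\to(0,\infty)$ is continuous and strictly decreasing, with limits $+\infty$ at $-\infty$ and $0$ at $+\infty$; consequently $y\mapsto i/M(y)$ is a continuous strictly increasing bijection from $i\R$ onto $i(0,\infty)$, which is (2).

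The only place that requires genuine care is the first step, where the integration-path parametrization in \eqref{eq:CauchyND2} must be carried out cleanly and the sign of the purely imaginary quantity $\widetilde{G}(iy)$ correctly pinned down; I view this as the main (though mild) technical obstacle. Once the Mills-ratio representation $\widetilde{F}(iy)=i/M(y)$ is in hand, both claims are essentially immediate from well-known one-variable analysis of $M$.
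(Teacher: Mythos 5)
Your proof is correct and follows essentially the same route as the paper: the function you call $M$ is exactly the paper's $\rho(x)=i\widetilde{G}(ix)$ obtained from \eqref{eq:CauchyND2}, and both arguments reduce the lemma to showing that this Mills ratio is positive, strictly decreasing, and has the limits $+\infty$ and $0$. Your explicit bound $M(y)<1/y$ for $y>0$ (giving $M'(y)=yM(y)-1<0$) supplies the detail the paper leaves as ``some calculus,'' so nothing is missing.
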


\begin{proof}
Obviously it suffices to verify the equivalent assertions
\begin{enumerate}[label=(\roman*),leftmargin=1.2cm]
\item\label{item:G1} $\displaystyle \lim_{\substack{x\in \R\\ x\rightarrow\infty}} \widetilde{G}(ix)=i0$\quad and \quad$\displaystyle \lim_{\substack{x\in \R\\ x\rightarrow-\infty}} \widetilde{G}(ix)=-i\infty$; 
\item\label{item:G2} $\widetilde{G}$ is a bijection from $i\R$ onto $i(-\infty,0)$.
\end{enumerate}

\noindent
{\bf Proof of \ref{item:G1} and \ref{item:G2}.}
\begin{enumerate}[label=(\roman*),leftmargin=0.8cm]

\item The first limit is clear from \eqref{eq:cauchy}. By \eqref{eq:CauchyND}, we have
\begin{align}\label{eq:G(ix)}
\widetilde{G}(ix)=G(ix)- 2\pi i \cdot \frac{1}{\sqrt{2\pi}} e^{\frac{x^2}{2}} \rightarrow -i\infty,  \qquad x\rightarrow -\infty.
\end{align}

\item By \eqref{eq:CauchyND2}, for $x\in \R$, we get
\begin{align*} 
\rho(x):= i\widetilde{G}(ix)=\sqrt{\frac{\pi}{2}} e^{\frac{x^2}{2}} \left[1- \frac{2}{\sqrt{\pi}} \int_0^{x/\sqrt{2}} e^{-t^2} \dd t \right] \in (0,\infty),
\end{align*}
 It follows from (1) that $\lim_{x\rightarrow\infty} \rho(x)=0$ and $\lim_{x\rightarrow-\infty} \rho(x)=\infty$, and  hence $\widetilde{G}$ maps $i\R$ onto $i(-\infty,0)$.  It remains to establish the monotonicity of $\rho$. Since
\begin{align*}
\rho'(x)=\sqrt{2}x e^{\frac{x^2}{2}} \int_{x/\sqrt{2}}^\infty e^{-t^2}\,\dd t - 1,
\end{align*}
it obviously follows that $\rho'(x)<0$ for all $x\le 0$. Some calculus also shows that $\rho'(x)<0$ for all $x>0$. Consequently, $\rho$ is strictly decreasing on $\R$, and therefore $\widetilde{G}$ is a bijection from $i\R$ onto $i(-\infty,0)$. 
\end{enumerate}
\vspace{-7mm}
\end{proof}

The Cauchy transform is well known to have an asymptotic expansion as $z\to \infty,$ $ \epsilon < \arg z < \pi -\epsilon$ for any fixed $\epsilon \in (0,\pi)$, see e.g.\ \cite[Theorem 3.2.1]{Akh65}. For the normal distribution, we can see that the asymptotic expansion holds in the larger domain 
\begin{align*}
D_{\epsilon}:=\left\{z\in \C\setminus\{0\}: \arg(z) \in \left(-\frac{\pi}{4}+\epsilon, \ \frac{5}{4}\pi - \epsilon \right) \right\},    
\end{align*}
with $\epsilon\in(0,\pi/4)$ arbitrary but fixed. To state the formula, we denote by $\{m_k\}_{k\ge0}$ the moment sequence of $N(0,1)$, i.e.\ 
\[
m_0=1; \qquad m_n= \begin{cases} (n-1)!! & \text{if~$n$ is even}, \\ 0& \text{if $n$ is odd}
\end{cases}
 \quad(n \in \N).
\]

\begin{figure}[b!]
\begin{center}
\begin{overpic}[width=5cm]{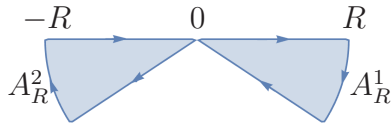}
\put(4,28){$-R$}
\put(48,28){$0$}
\put(88,28){$R$}
\put(0,10){$A_R^2$}
\put(90,10){$A_R^1$}
\end{overpic}
\caption{Contour integrals} \label{fig:D}
\end{center}
\end{figure}

\begin{lemma}\label{lem:asymptotic_Cauchy}
For any fixed $N \in \N$ and fixed $\epsilon \in (0, \frac{\pi}{4})$, the asymptotic expansions
\begin{align}
\widetilde{G}(z) &=  \sum_{n=0}^{N-1} \frac{m_{2n}}{z^{2n+1}} + O(z^{-2N-1})   \quad \text{and}   \label{eq:asymptotic_Cauchy1} \\
\widetilde{G}' (z) &=  - \sum_{n=0}^{N-1} \frac{(2n+1)m_{2n}}{z^{2n+2}} + O(z^{-2N-2})  \label{eq:asymptotic_Cauchy2}
\end{align}
hold as $z\rightarrow\infty$ with $z\in D_{\epsilon}$. 
\end{lemma}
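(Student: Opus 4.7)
The plan is to extend the standard moment expansion of $G$, valid on any truncated cone inside $\C^+$, to the full sector $D_\epsilon$ by deforming the integration contour $\R$ into a suitably rotated line, using the Gaussian decay of $e^{-w^2/2}$ to control the arcs at infinity. The derivative expansion \eqref{eq:asymptotic_Cauchy2} then follows by differentiating under the contour integral.

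First, for $z \in \C^+$ with $\arg z$ bounded away from $0$ and $\pi$, I would substitute the identity
\[
\frac{1}{z-x} = \sum_{n=0}^{2N-1} \frac{x^n}{z^{n+1}} + \frac{x^{2N}}{z^{2N}(z-x)}
\]
into $G(z) = \tfrac{1}{\sqrt{2\pi}}\int_\R \frac{e^{-x^2/2}}{z-x}\,\dd x$, use $m_n = 0$ for odd $n$ to obtain the main sum in \eqref{eq:asymptotic_Cauchy1}, and bound the remainder by $O(|z|^{-2N-1})$ via the estimate $|z-x| \geq c|z|$ valid for $x \in \R$.

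To cover the part of $D_\epsilon$ lying below the real axis, I would fix $\theta \in (0, \pi/4)$ and set $\Gamma_\theta := \{t e^{-i\theta} : t \in \R\}$. Applying Cauchy's theorem to the closed contour $[-R, R] \cup A_R^1 \cup \{te^{-i\theta} : -R \leq t \leq R\} \cup A_R^2$ sketched in Figure~\ref{fig:D} (with the $\Gamma_\theta$-segment reversed), and passing to the limit $R\to\infty$ (the two arcs vanish since $|e^{-w^2/2}| = e^{-|w|^2 \cos(2\theta)/2}$ with $\cos(2\theta) > 0$), one obtains, for $z \in \C^+$ with $\arg z \in (0, \pi - \theta)$, the identity $\widetilde G(z) = \tfrac{1}{\sqrt{2\pi}}\int_{\Gamma_\theta} \frac{e^{-w^2/2}}{z-w}\,\dd w$. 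The right-hand side is analytic on the entire connected component of $\C \setminus \Gamma_\theta$ containing this sector, so by the identity theorem the representation extends to all $z$ with $\arg z \in (-\theta, \pi - \theta)$. A mirror construction using $\Gamma_{-\theta} := \{te^{i\theta} : t \in \R\}$ produces the analogous identity on $\arg z \in (\theta, \pi + \theta)$. Choosing $\theta := \pi/4 - \epsilon/2$, the two sectors together cover $D_\epsilon$.

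On each rotated representation, the same Taylor decomposition applies. The integrals $\tfrac{1}{\sqrt{2\pi}}\int_{\Gamma_{\pm\theta}} w^n e^{-w^2/2}\,\dd w$ equal $m_n$ by the same Gaussian contour-shift argument, and the remainder is bounded by $O(|z|^{-2N-1})$ uniformly on $D_\epsilon$, because $z$ stays at angular distance $\geq \epsilon/2$ from the chosen contour, yielding $|z - w| \geq |z|\sin(\epsilon/2)$ for all $w \in \Gamma_{\pm\theta}$ (and the weighted integral $\int_{\Gamma_{\pm\theta}} |w|^{2N} e^{-|w|^2\cos(2\theta)/2}\,|\dd w|$ converges). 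For \eqref{eq:asymptotic_Cauchy2}, I would differentiate inside the contour integral to obtain $\widetilde G'(z) = -\tfrac{1}{\sqrt{2\pi}}\int_{\Gamma_{\pm\theta}} \frac{e^{-w^2/2}}{(z-w)^2}\,\dd w$ and expand $(z-w)^{-2} = \sum_{n=0}^{2N-1} \frac{(n+1)w^n}{z^{n+2}} + (\text{remainder})$, with the remainder controlled by the same angular-separation bound. The principal technical obstacle is verifying the vanishing of the arcs $A_R^{1,2}$ as $R\to\infty$, which hinges precisely on the constraint $\cos(2\theta) > 0$, i.e.\ $\theta < \pi/4$; once this estimate is in place, everything else reduces to routine Gaussian bookkeeping.
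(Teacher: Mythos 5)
Your proposal is correct and follows essentially the same route as the paper: deform the contour of integration into rays of argument in $(-\pi/4,0)$ (and the mirror image), justify the deformation by the vanishing of the arcs $A_R^{1,2}$ via $|e^{-w^2/2}|=e^{-R^2\cos(2\theta)/2}$, extend by the identity theorem, and then run the standard geometric-series expansion with an angular-separation bound on $|z-w|$ plus differentiation under the integral for $\widetilde G'$. The only cosmetic difference is that you use two rotated straight lines $\Gamma_{\pm\theta}$ covering $D_\epsilon$ by two overlapping sectors, whereas the paper uses the single bent contour $\partial D_\eta$ with $\eta<\epsilon$.
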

\begin{proof}
For $0<\epsilon <\pi/4$ we first observe that
\begin{align}\label{eq:another_rep}
G (z)=\int_{\partial D_\epsilon} \frac{1}{z-w}\frac{1}{\sqrt{2\pi}} e^{-\frac{w^2}{2}}\,\dd w, \qquad z\in \C^+. 
\end{align}
This is an easy consequence of Cauchy's integral formula applied to the region(s) in Figure \ref{fig:D}
and the fact that the contour integrals over the arcs $A_R^1:=\{Re^{i\theta}: -\frac{\pi}{4}+\epsilon \le \theta \le 0\}$ and $A_R^2:=\{Re^{i\theta}: \pi \le \theta \le \frac{5\pi}{4}-\epsilon\}$ converge to zero as $R \to \infty$. Indeed, for $A_R^1$
\begin{align*}
\left|   \int_{A_R^1} \frac{1}{z-w}\frac{1}{\sqrt{2\pi}} e^{-\frac{w^2}{2}}\,\dd w   \right| 
& =\left|  \int^{-\frac{\pi}{4}+\epsilon}_0 \frac{1}{z-Re^{i\theta}}\frac{1}{\sqrt{2\pi}} e^{-\frac{1}{2}R^2e^{2i\theta}} iRe^{i\theta}  \,\dd \theta \right| \\
&\le  \frac{1}{\sqrt{2\pi}}\int_{-\frac{\pi}{4}+\epsilon}^0 \frac{R}{R-|z|} e^{-\frac{1}{2}R^2\cos2\theta}  \,\dd \theta \\
&\le  \frac{(\pi/4 -\epsilon)}{\sqrt{2\pi}}  \frac{R}{R-|z|} e^{-\frac{1}{2}R^2\sin 2\epsilon}  \to 0\qquad (R\to\infty).  
\end{align*}
The integral over $A_R^2$ is similarly estimated.  

The remaining arguments are analogous to the standard one for $ \epsilon < \arg z < \pi -\epsilon$, see e.g.\ the proof of \cite[Theorem 3.2.1]{Akh65}. For the reader's convenience the rest of the proof is included in Appendix \ref{appendix}. 
\end{proof}

By Lemma \ref{lem:asymptotic_Cauchy}, the analytic extension $\widetilde{F}$ has no poles on $D_{\epsilon,R}:= D_\epsilon \cap\{z: |z|>R\}$ for sufficiently large $R>0$ and 
\begin{align}\label{eq:F_asympt}
\widetilde{F}(z)=z(1+o(1)), \qquad z\rightarrow\infty, \ z\in D_{\epsilon}.
\end{align}

The next two lemmas are basic ingredients to construct and analyze a compositional inverse function of $\widetilde{F}$. 
\begin{lemma}\label{lem:F-transform}
For any $0< \epsilon< \epsilon' < \pi/4$   there exist $R>0$ such that $\widetilde{F}$ is univalent on $D_{\epsilon,R}$ and $D_{\epsilon',R'}\subset\widetilde{F}(D_{\epsilon,R})$, where $R' := [1+\sin(\epsilon' - \epsilon)]R$. 
\end{lemma}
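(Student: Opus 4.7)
The proof will rest on the asymptotic expansion of Lemma \ref{lem:asymptotic_Cauchy} applied with $N=1$, which yields $\widetilde F(z) = z + O(1/z)$ and $\widetilde F'(z) = 1 + O(1/z^2)$ as $z \to \infty$ in $D_\epsilon$. In particular, there exist constants $C>0$ and $R_0 > 0$ such that $|\widetilde F(z) - z| \le C/|z|$ and $|\widetilde F'(z) - 1| \le 1/2$ on $D_\epsilon \cap \{|z| \ge R_0\}$. Set $s := \sin(\epsilon' - \epsilon)$; the parameter $R$ will be taken sufficiently large (in particular $R \ge R_0$ and $R^2 \ge C/s$) to make the estimates below work.

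For the \emph{containment} $D_{\epsilon', R'} \subset \widetilde F(D_{\epsilon, R})$, I would fix $w \in D_{\epsilon', R'}$ and apply Rouch\'e's theorem on $\partial B(w, sR)$. The closed disk $\overline{B(w, sR)}$ is contained in $D_{\epsilon, R}$ for two reasons: (i) the Euclidean distance from $w$ to either boundary ray of $D_\epsilon$ is at least $s|w| \ge sR' > sR$, so the disk stays inside the angular sector $D_\epsilon$; and (ii) $|z| \ge |w| - sR \ge R' - sR = R$ for every $z$ in the disk. On the boundary circle, the estimate $|\widetilde F(z) - z| \le C/R < sR = |z - w|$ verifies the Rouch\'e hypothesis for $\widetilde F(\cdot) - w$ versus $(\cdot) - w$, producing a unique $z \in B(w, sR) \subset D_{\epsilon, R}$ with $\widetilde F(z) = w$.

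For \emph{univalence}, suppose $z_1, z_2 \in D_{\epsilon, R}$ both satisfy $\widetilde F(z_1) = \widetilde F(z_2) = w$. The bounds $|z_j - w| = |\widetilde F(z_j) - z_j| \le C/R$ give $|z_1 - z_2| \le 2C/R$. The plan is to write $0 = \widetilde F(z_1) - \widetilde F(z_2) = (z_1 - z_2) + \int_{[z_2, z_1]} (\widetilde F'(\zeta) - 1)\, \dd \zeta$ and bound the integral by $|z_1 - z_2|/2$ via $|\widetilde F'(\zeta) - 1| \le 1/2$, forcing $z_1 = z_2$. The hard part is to justify that the straight segment $[z_2, z_1]$ actually lies in $D_\epsilon \cap \{|\zeta| \ge R_0\}$ where the derivative bound is available; this is non-trivial because $D_\epsilon$ has angular extent $3\pi/2 - 2\epsilon > \pi$ and is therefore non-convex. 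I will resolve this by exploiting that $|z_1 - z_2| \le 2C/R$ is tiny relative to $|z_j| \ge R$: the triangle inequality immediately yields $|\zeta| \ge R - 2C/R \ge R_0$ along the segment, while the chord-length estimate $|z_1 - z_2|^2 \ge 4R^2 \sin^2((\arg z_1 - \arg z_2)/2)$ forces $|\arg z_1 - \arg z_2| = O(1/R^2)$, so the segment is confined to a thin angular wedge strictly inside $D_\epsilon$ for $R$ large.
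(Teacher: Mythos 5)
Your proof is correct, and it diverges from the paper's in a worthwhile way. For the containment $D_{\epsilon',R'}\subset\widetilde F(D_{\epsilon,R})$, the paper runs a global argument: it shows the image curve $\widetilde F(\partial D_{\epsilon,R})$ stays outside $D_{\epsilon',R'}$ via the same distance estimate $d(w,\C\setminus D_\epsilon)\ge \sin(\epsilon'-\epsilon)|w|$ you use, and then invokes the winding number; your local Rouch\'e argument on the disks $B(w,sR)$ is the same argument-principle idea applied pointwise, and is equally valid (only make $R^2>C/s$ strict so that $C/R<sR$ on the circle). The real difference is in the univalence part: the paper verifies $\re[\widetilde F']\ge 1/2$ and applies the Noshiro--Warschawski criterion on two convex subdomains $U,V$ covering $D_{\epsilon,R_0}$, then glues using closeness to the identity, whereas you give a direct injectivity argument --- two preimages of the same $w$ satisfy $|z_1-z_2|\le 2C/R$, and integrating $\widetilde F'-1$ along the segment $[z_2,z_1]$ forces $z_1=z_2$. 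Your treatment of the non-convexity of $D_\epsilon$ is the crux and it is sound: the segment lies in the convex cone spanned by $z_1,z_2$, i.e.\ the wedge of arguments between $\arg z_1$ and $\arg z_2$, and since that angular gap is $O(R^{-2})$ while the complementary sector $\C\setminus D_\epsilon$ has opening $\pi/2+2\epsilon$, the wedge (hence the segment) cannot leave $D_\epsilon$; it would be worth writing that one sentence explicitly rather than leaving it at ``thin angular wedge.'' Your route buys a more self-contained univalence proof that avoids the convex-decomposition bookkeeping (and the gluing step the paper only sketches); the paper's Noshiro--Warschawski route buys univalence on all of $D_{\epsilon,R}$ from a single derivative condition without tracking how close preimages must be.
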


\begin{proof}
\textbf{Part 1: $D_{\epsilon',R'}\subset\widetilde{F}(D_{\epsilon,R})$.} Due to \eqref{eq:F_asympt}, there exists an $R>0$ such that 
\[
|\widetilde{F}(z) -z | < \sin (\epsilon'-\epsilon) |z|, \qquad z\in \partial D_{\epsilon, R}. 
\]

Let $d(z,D_{\epsilon',R'})$ stand for the distance between $z\in \partial D_{\epsilon,R}$ and the domain $D_{\epsilon',R'}$. It is elementary to verify that 
\begin{align}\label{eq:distance}
d(z, D_{\epsilon',R'})\ge \sin(\epsilon'-\epsilon) |z|, \qquad z\in \partial D_{\epsilon,R}, 
\end{align}
which implies that the curve $\widetilde{F}(\partial D_{\epsilon,R})$ does not intersect with $D_{\epsilon',R'}$ and every point of $D_{\epsilon',R'}$ has rotation number 1 with respect to this curve (viewed as a closed curve in the Riemann sphere), and hence $D_{\epsilon',R'}\subset\widetilde{F}(D_{\epsilon,R})$.

\vspace{2mm}
\noindent
\textbf{Part 2: Univalence of $\widetilde{F}$.}   In order to resort to the Noshiro--Warschawski criterion (see e.g. \cite[Proposition 1.10]{Pom92} or the original articles \cite{No34,Wa35}), we estimate the derivative $\widetilde{F}'$ on $D_{\epsilon,R}$. 
Take $0< \eta <\epsilon< \pi/4$. By Lemma \ref{lem:asymptotic_Cauchy}, we have
\[
\widetilde{F}'(z)=-\frac{\widetilde{G}'(z)}{\widetilde{G}(z)^2} \sim 1 \quad \text{as} \quad z\rightarrow\infty \quad\text{with}\quad z\in D_\epsilon, 
\]
and therefore, we can take $R_0>0$ large enough so that $\re[\widetilde{F}']\ge 1/2$ on $D_{\epsilon,R_0}$. 

Because $D_{\epsilon,R_0}$ is not convex, we introduce supplementary convex domains. 
Let $\ell$ be the half-line starting from the point $4R_0 e^{i(-\frac{\pi}{4} +\epsilon)}$, passing $4R_0 i$ and going to infinity. 
Let $U$ be the domain that has the boundary $\ell \cup \{ re^{i(-\frac{\pi}{4} +\epsilon)}: r\ge 4R_0\}$ and contains the point $4R_0(1+i)$.  Let $V$ be the reflection of $U$ with respect to the imaginary axis. Since $U$ and $V$ are convex domains contained in $D_{\epsilon,R_0}$, the Noshiro--Warschawski criterion implies that $\widetilde{F}$ is univalent in $U$ and $V$. Choosing $R=8R_0$ and using the fact that $\widetilde{F}$ is close to the identity map, i.e.\ $\widetilde{F}(z) = z(1 +o(1))$, we can conclude that $\widetilde{F}$ is univalent in $D_{\epsilon,R}$ for large $R_0>0$. 
\end{proof}

\begin{lemma}\label{cor:F}
Let $\Xi \subseteq\C$ be the domain with boundary 
\[
\partial \Xi=C_{\pi} \cup C_{-\pi},
\]
where $C_{\pm \pi}:=\{re^{i\theta} : r>0, \ -\pi <\theta<0, \ r^2\sin 2\theta=\pm\pi\}$.  Then the function $\widetilde{G}$ has no zeros in $\Xi \cup \partial \Xi$, and therefore $\widetilde{F}$ is analytic in $\Xi \cup \partial \Xi$. Moreover, the function $\Restr{\widetilde{F}}{\Xi \cup \partial \Xi}$ satisfies 
\begin{enumerate}[label=\rm(\arabic*),leftmargin=1.2cm]
\item\label{item:F1} $\im [\widetilde{F}(z)]<0$ for all $z\in \partial \Xi$,
\item $\re [\widetilde{F}(z)]>0$ for all $z\in \Xi \cap \{z: \re(z)>0\}$, 
\item\label{item:F3} $\displaystyle\lim_{\substack{\im(z) \to- \infty \\ z\in \Xi}}\widetilde{F}(z) =0$.
\end{enumerate}
\end{lemma}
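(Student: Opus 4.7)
The plan is to first establish that $\widetilde{G}$ is zero-free on $\Xi \cup \partial\Xi$, so that $\widetilde F = 1/\widetilde G$ is analytic there, and then to derive assertions (1), (2), (3) from sign analyses of the real and imaginary parts of $\widetilde G$. The decisive geometric feature of $\partial\Xi$ is that on $C_{\pm\pi}$ we have $xy=\mp\pi/2$, hence $e^{-ixy}=\pm i$; this makes the ``extra term'' $-i\sqrt{2\pi}\,e^{-z^2/2}$ in \eqref{eq:CauchyND} purely real on $\partial\Xi$ and only a quarter-turn from real just inside.

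For the non-vanishing I would work piece by piece. On $\C^+$ we have $\widetilde G=G$ mapping into $\C^-$, so $\widetilde G\ne 0$; on $\R$, \eqref{eq:Stieltjes} gives $\im\widetilde G(x)<0$. On $\partial\Xi\subset\C^-$, the geometric observation above makes $-i\sqrt{2\pi}\,e^{-z^2/2}$ real, so $\im\widetilde G(z)=\im G(z)>0$, which also immediately delivers assertion (1). In the remaining interior portion $\Xi\cap\C^-$ I would use
\[
\re\widetilde G(z) \;=\; \re G(z) \;-\; \sqrt{2\pi}\,e^{(y^2-x^2)/2}\sin(xy).
\]
On the 4th-quadrant portion ($x>0$, $y<0$, $|xy|<\pi/2$) the second summand is strictly positive because $\sin(xy)<0$, while $\re G(z) = \re G(\bar z)$ is strictly positive once $\re G>0$ is known on the open first quadrant of $\C^+$. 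On the 3rd-quadrant portion the symmetry $\widetilde G(-z)=-\widetilde G(z)$ of $N(0,1)$ flips the sign to $\re\widetilde G<0$. Finally, Lemma \ref{lem:F(ix)} handles the negative imaginary axis.

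Assertion (2) is then immediate from the same computations, assembling the three pieces of $\Xi\cap\{\re z>0\}$: $\C^+\cap\{\re z>0\}$ (where $\re\widetilde G=\re G>0$), the positive real axis (where $\re\widetilde G(x)=e^{-x^2/2}\sqrt{2}\int_0^{x/\sqrt 2} e^{t^2}\,\dd t>0$ from \eqref{eq:CauchyND2}), and the 4th-quadrant portion of $\Xi$. For assertion (3), I would use \eqref{eq:CauchyND2} directly. For $z=x+iy$ with $y\to -\infty$ in $\Xi$, and hence $|x|<\pi/(2|y|)\to 0$, I would split the integration path $0\to z$ into the vertical leg $0\to iy$ and the horizontal leg $iy\to z$: the vertical leg contributes $-i\int_0^{|y|}e^{-s^2/2}\,\dd s\to -i\sqrt{\pi/2}$, while the horizontal leg is bounded in modulus by $|x|\,e^{-y^2/2}\to 0$. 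Therefore the bracket in \eqref{eq:CauchyND2} tends to $-i\sqrt{2\pi}\ne 0$, whereas $|e^{-z^2/2}|=e^{(y^2-x^2)/2}\to\infty$, forcing $|\widetilde G(z)|\to\infty$ and $\widetilde F(z)\to 0$.

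The main obstacle I anticipate is proving $\re G>0$ on the open first quadrant of $\C^+$. I would deduce this from the minimum principle for the bounded harmonic function $\re G$ on that quadrant: the symmetry of $N(0,1)$ gives $G(iy)=-iy\int(y^2+t^2)^{-1}\rho(t)\,\dd t$, purely imaginary, so $\re G=0$ on the positive imaginary axis; on the positive real axis the boundary value $\re G(x+i0^+)$ coincides with $e^{-x^2/2}\sqrt{2}\int_0^{x/\sqrt 2} e^{t^2}\,\dd t$ (from \eqref{eq:CauchyND2}), strictly positive for $x>0$; and $G(z)\sim 1/z$ forces $\re G\to 0$ at infinity. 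Nonnegativity on all these boundary pieces, combined with the boundedness of $\re G$ on the closure (a consequence of continuous extension to the one-point compactification), forces $\re G\ge 0$ in the open quadrant by the standard extension of the minimum principle to unbounded domains, and the strong maximum principle upgrades this to strict positivity.
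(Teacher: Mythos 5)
Your proposal is correct in substance and follows essentially the same route as the paper: both reduce the lemma to sign conditions on $\re[\widetilde{G}]$ and $\im[\widetilde{G}]$, both exploit the decomposition \eqref{eq:CauchyND} together with the observation that $2xy=\mp\pi$ on $C_{\pm\pi}$ makes the term $-i\sqrt{2\pi}\,e^{-z^2/2}$ real on $\partial\Xi$ (and of positive real part just inside, in the fourth quadrant), and both fall back on Lemma \ref{lem:F(ix)} for the negative imaginary axis. Two of your choices genuinely differ from the paper's. For the positivity of $\re[G]$ on the open first quadrant the paper simply cites \cite[Lemma 3.1]{BH13}, whereas you give a self-contained minimum-principle argument; this is sound (continuity of $G$ up to $\R$ plus $G(z)\sim 1/z$ give boundedness on the compactified quadrant, the boundary values are nonnegative and not identically zero, and the strong minimum principle upgrades to strict positivity) and is a legitimate substitute for the external reference. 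For assertion (3) you use \eqref{eq:CauchyND2} and split the contour, while the paper argues from \eqref{eq:CauchyND} via $G(z)\to0$ and $\arg z\to-\pi/2$; both work.

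The one genuine flaw is the symmetry you invoke for the third-quadrant portion: the identity $\widetilde{G}(-z)=-\widetilde{G}(z)$ is \emph{false} for the analytic continuation. From \eqref{eq:CauchyND2} one computes $\widetilde{G}(z)+\widetilde{G}(-z)=-i\sqrt{2\pi}\,e^{-z^2/2}\neq0$; only the restriction of $G$ to $\C\setminus\R$ is odd, and the extra exponential term breaks the odd symmetry of the continuation. The correct symmetry is the reflection in the imaginary axis, $\widetilde{G}(-\bar z)=-\overline{\widetilde{G}(z)}$, which maps the fourth-quadrant portion of $\Xi$ onto the third-quadrant portion, flips the sign of $\re[\widetilde{G}]$ and preserves $\im[\widetilde{G}]$. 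With this replacement your covering of $\Xi\cup\partial\Xi$ by sign-definite pieces is complete and the non-vanishing of $\widetilde{G}$, hence the whole lemma, follows as you intend.
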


\begin{proof} 
It suffices to establish
\begin{enumerate}[label=(\roman*),leftmargin=1.2cm]
\item\label{item:GG1} $\im[\widetilde{G}(z)]>0$ for all $z\in \partial \Xi$, 
\item\label{item:GG2} $\re [\widetilde{G}(z)]>0$ for all $z\in \Xi\cap \{z: \re(z)>0\}$, 
\item\label{item:GG3} $\displaystyle \lim_{\substack{\im(z) \to- \infty \\ z\in \Xi}}\widetilde{G}(z) =\infty$.   
\end{enumerate}
Note that \ref{item:GG1} and \ref{item:GG2} together with Lemma \ref{lem:F(ix)} (and the fact that $\im[\widetilde{G}]<0$ on $\R$) imply that $\widetilde G$ has no zeros in $\Xi \cup \partial \Xi$.

\vspace{3mm}
\noindent
{\bf Proof of \ref{item:GG1}--\ref{item:GG3}.}  The proofs are based on separate analyses of the two terms of formula \eqref{eq:CauchyND}. 
\begin{enumerate}[label=(\roman*),leftmargin=0.8cm]
\item If $z=re^{i\theta}\in \partial \Xi$, that is, $r^2\sin 2\theta=\pm \pi$ and $-\pi <\theta<0$, then 
\[
e^{-\frac{1}{2}z^2}= \mp ie^{-\frac{r^2}{2}\cos 2\theta},
\]
respectively. Hence for such $z$, we get
\[
\widetilde{G}(z)=G(z)- 2\pi i \cdot \frac{1}{\sqrt{2\pi}} \cdot \left(\mp ie^{-\frac{r^2}{2}\cos 2\theta}\right) \in \C^+.
\]

\item For $z \in \C^+\cap \{z: \re(z)>0\}$, according to \cite[Lemma 3.1]{BH13}, we have $\re[\widetilde{G}(z)] >0$ .  For $z=re^{i\theta} \in \Xi \cap \C^- \cap \{z: \re(z)>0\}$ we have $-\pi<r^2\sin 2\theta<0$ and hence the number
\[
e^{-\frac{z^2}{2}}= e^{-\frac{r^2}{2}\cos 2\theta} e^{-i \cdot \frac{r^2}{2}\sin 2\theta}
\]
has positive imaginary part.  Again by  \cite[Lemma 3.1]{BH13} $G(z)$ also has positive real part, so that  we conclude $\re[\widetilde{G}(z)]>0$. For $z= x \in (0,\infty)$,  the continuity of $G$ implies $\re[G(x -i0)]\ge0$ $(x>0)$, so the conclusion still holds. 

\item As $\im(z) \to- \infty, z\in \Xi$, the argument of $z$ tends to $-\pi/2$, so that $e^{-\frac{z^2}{2}}$ tends to $\infty$. On the other hand, $G(z)$ tends to zero. We are done.  
\end{enumerate}
\vspace{-7mm}
\end{proof}

\begin{remark}
The domain $\Xi \cap \C^-$ is the connected component of 
\[
\left\{ z \in \C^-: \im\left[ -i e^{-\frac{z^2}{2}} \right]<0 \right\}
\] 
that contains the negative imaginary axis, see Figure \ref{fig:xi}. 

\begin{figure}[t!]
\begin{center}
\begin{overpic}[width=5cm]{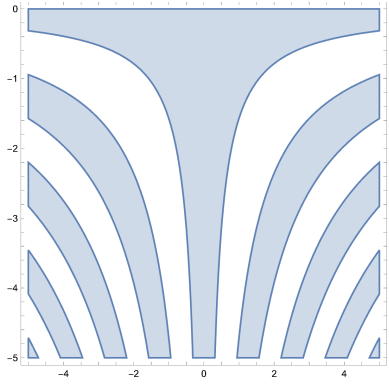}
\end{overpic}
\caption{The region $\left\{ z \in \C^-: \im\left[ -i e^{-\frac{z^2}{2}} \right]<0 \right\}$} \label{fig:xi}
\end{center}
\end{figure}


\end{remark}

\subsection{Construction of $\Omega$ and description of its boundary} \label{subsec:boundary}
Using results in the previous subsection, we construct $\Omega$ and describe its boundary (Theorem \ref{thm:omega}). The boundary turns out to be the graph of an analytic function. We first provide supplementary facts. 

\begin{proposition} \label{prop:curve}
For every $x>0$ there exists a unique point $H(x)$ in $\Xi \cap \C^- \cap \{z: \re(z)>0\}$ such that $\widetilde F(H(x))=x$. The curve $p_0^+ =\{H(x): x>0\}$ is real analytic and is mapped by $\widetilde{F}$ bijectively onto $(0,\infty)$. 
By symmetry, the curve $p_0^-$ which is the reflection of $p_0^+$ with respect to the imaginary axis is mapped by $\widetilde{F}$ bijectively onto $(-\infty, 0)$. 
\end{proposition}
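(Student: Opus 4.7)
The plan is to apply the argument principle to $\widetilde{F} - x$ on a bounded exhaustion of $\Omega^+ := \Xi \cap \C^- \cap \{z : \re(z) > 0\}$ and conclude that for every $x > 0$ there is exactly one preimage, which is simple. By Lemma~\ref{cor:F}, $\widetilde{F}$ is analytic and has no poles on $\overline{\Omega^+}$. First I would verify that $\widetilde{F} \neq x$ on $\partial \Omega^+$: on the positive real axis, $\im[\widetilde{F}] > 0$ by \eqref{eq:Stieltjes}; on the negative imaginary axis, $\widetilde{F} \in i(0,\infty)$ by Lemma~\ref{lem:F(ix)}; on $C_{-\pi}$, $\im[\widetilde{F}] < 0$ by Lemma~\ref{cor:F}(1). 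On the auxiliary arcs of a bounded exhaustion, $\widetilde{F}$ stays close to $\widetilde{F}(0) = i\sqrt{2/\pi}$ on the small arc near $0$, and close to $\infty$ or $0$ on the two large arcs of $\{|z| = R\}$, by \eqref{eq:F_asympt} and Lemma~\ref{cor:F}(3).

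Then I would trace the image $\widetilde{F}(\partial\Omega^+)$ with the orientation that keeps $\Omega^+$ on the left. Starting at $\widetilde{F}(0) = i\sqrt{2/\pi}$, the image descends the positive imaginary axis to $0$ (by Lemma~\ref{lem:F(ix)}), then loops through $\C^-$ from $0$ (Lemma~\ref{cor:F}(3)) to $\infty$ with $\arg \to 0^-$ (by \eqref{eq:F_asympt}), and finally returns through the upper half-plane from $\infty$ back to $i\sqrt{2/\pi}$ (image of the positive real axis). Viewed on the Riemann sphere this is a closed curve whose winding number around every $x > 0$ is $+1$: near $x$, the image passes below $x$ moving rightward and above $x$ moving leftward. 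The argument principle then gives that $\widetilde{F} - x$ has a unique simple zero $H(x) \in \Omega^+$, and in particular $\widetilde{F}'(H(x)) \neq 0$.

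The analytic implicit function theorem then provides a local holomorphic inverse of $\widetilde{F}$ near each $H(x)$, so $x \mapsto H(x)$ is holomorphic on a complex neighborhood of $(0, \infty)$; in particular $p_0^+$ is real analytic and $\Restr{\widetilde{F}}{p_0^+}$ is tautologically a bijection onto $(0, \infty)$. For the symmetric statement, the evenness of the standard normal density gives $\widetilde{G}(-\bar{z}) = -\overline{\widetilde{G}(z)}$, hence $\widetilde{F}(-\bar{z}) = -\overline{\widetilde{F}(z)}$; since $z \mapsto -\bar{z}$ exchanges $C_{-\pi}$ with $C_{\pi}$ and preserves $\C^-$, it maps $\Omega^+$ bijectively onto $\Xi \cap \C^- \cap \{\re < 0\}$, so $p_0^- = \{-\overline{H(x)} : x > 0\}$ is the reflection of $p_0^+$ across the imaginary axis and $\widetilde{F}(p_0^-) = (-\infty, 0)$.

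I expect the main obstacle to be the careful geometric tracking of the image curve at the three cusp regions (near $0$, $-i\infty$, and $+\infty$) on the bounded exhaustion, in order to rigorously confirm that the winding number is exactly $+1$ (rather than $0$ or higher), uniformly for $x$ in compact subsets of $(0, \infty)$.
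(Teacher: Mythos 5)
Your proposal is correct and follows essentially the same route as the paper: the paper also applies a winding-number/argument-principle count to $\widetilde F$ on the boundary of a truncated version of $\Xi\cap\C^-\cap\{\re(z)>0\}$ (its curve $c_R$), using Lemma \ref{lem:F(ix)}, Lemma \ref{cor:F} and $\im[\widetilde F]>0$ on $\R$ to see that each $x\in(0,\tfrac12 R)$ is enclosed exactly once, and then deduces analyticity of $H$ from $\widetilde F'\neq 0$ on $p_0^+$. Your extraction of the simplicity of the zero directly from the argument-principle count, and the explicit reflection identity $\widetilde F(-\bar z)=-\overline{\widetilde F(z)}$ for $p_0^-$, are minor refinements of the same argument.
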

\begin{proof}
Let $c_R$ be the simple closed curve in the Riemann sphere consisting of $i[-\infty, 0], \{z \in \partial\Xi: 0< \re(z) < R \}, \{R +i y: -\frac{\pi}{2R} \le y \le 0\}$ and $[0, R]$. 
We take $R$ sufficiently large so that $|\widetilde{F}(z) -z| < \frac1{2}|z|$ for $|z| \ge R, z \in c_R$. By Lemma \ref{lem:F(ix)},  Lemma \ref{cor:F} \ref{item:F1}, \ref{item:F3} and the fact that $\im[F]>0$ on $\R$, we can observe that every point of $(0, \frac1{2}R)$ is surrounded by the curve $\widetilde{F}(c_R)$ exactly once. This implies that for every $x \in (0, \frac1{2}R)$ there exists a unique point $H(x)$ in the Jordan domain surrounded by $c_R$ such that $\widetilde{F}(H(x)) =x$. Because $R$ is arbitrary as long as sufficiently large, for every $x \in (0, \infty)$ there exists a unique point $H(x)$ in the domain surrounded by the curve $i[-\infty, 0]\cup \{z \in \partial\Xi: 0< \re(z) < \infty \} \cup [0,\infty)$ such that $\widetilde{F}(H(x)) =x$. It remains to prove the analyticity of the function $H$. First note that $\widetilde F'(z) \ne 0$ holds on $p_0^+$; otherwise the point $H(x)$ would not be unique. Therefore, $\widetilde F$ is locally bijective at each point $H(x)$ and hence its inverse function $H$ is also analytic in a complex neighborhood of each point $x>0$. 
\end{proof}

We then study properties of analytic functions
\begin{equation} \label{eq:gh}
g(x) := \re[H(x)] \qquad \text{and} \qquad h(x):= -\im[H(x)].
\end{equation}
Note that this function $h$ will turn out to coincide with $h$ in Theorem \ref{thm:freeLevy}, see \eqref{eq:inverse_F} and \eqref{eq:Stieltjes_h}. 

Obviously, we have $g, h >0$. 
By analytic continuation,  Equation \eqref{eq:ODE} easily extends to 
\begin{align}\label{eq:ODE2}
\widetilde{F}'(z)= \widetilde{F}(z)(z- \widetilde{F}(z)), \qquad z\in \Xi. 
\end{align}
Because $H$ is the compositional inverse map of $\Restr{\widetilde{F}}{p_0^+}$ and $\widetilde{F}' (z)$ does not vanish on $p_0^+$,  
 the ODE \eqref{eq:ODE2} restricted to $p_0^+$ entails the ODE $H'(x) = \frac{1}{x(H(x)-x)}$, which is equivalent to  
\begin{align}
g'(x)&=\frac{g(x)-x}{x((g(x)-x)^2+h(x)^2)} \quad \text{and} \label{eq:g} \\
h'(x)&=-\frac{h(x)}{x((g(x)-x)^2+h(x)^2)}.  \label{eq:h}
\end{align}
Using this ODE we provide some properties of $g$ and $h$ below. Some of the results will be made much finer in Section \ref{sec:freeLevy}. 

\begin{proposition}\label{prop:gh}
The following hold: 
\begin{align*}
g'>0 \quad \text{and} &\quad h'<0 \quad \text{on} \quad (0,\infty);  \\ 
 \lim_{x\to \infty} g(x) =\infty, \quad \lim_{x\to \infty} h(x) =0, &\quad  \lim_{x\to 0^+} g(x) =0 \quad \text{and}\quad  \lim_{x\to 0^+} h(x) =\infty. 
\end{align*}
 \end{proposition}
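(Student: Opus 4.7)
My plan is to extract everything from the ODE system \eqref{eq:g}--\eqref{eq:h}, supplemented by the asymptotic expansion of $\widetilde{F}$ at infinity (Lemma \ref{lem:asymptotic_Cauchy}) and the geometry of $\Xi$ near the two endpoints. The monotonicity $h'(x) < 0$ is immediate from \eqref{eq:h}: the numerator is strictly negative and the denominator strictly positive. So $h$ is strictly decreasing on $(0, \infty)$, and the one-sided limits $h(\infty) \in [0,\infty)$ and $h(0^+) \in (0,\infty]$ exist.

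For the behaviour at $+\infty$, a no-pole argument first shows $|H(x)| \to \infty$: any finite accumulation point of $H(x_n)$ along $x_n \to \infty$ would lie in $\text{cl}(\Xi)$ and be a pole of $\widetilde F$, i.e.\ a zero of $\widetilde G$, which is forbidden on $\text{cl}(\Xi)$ by Lemma \ref{cor:F}. Since $H(x) \in \Xi \cap \C^- \cap \{\re z > 0\}$, whose boundary $C_{-\pi}$ is the hyperbola $\re(z)\cdot(-\im(z)) = \pi/2$, one has the built-in bound $g(x) h(x) < \pi/2$; combined with the boundedness of $h$ this forces $g(x) \to \infty$ and $\arg H(x) \to 0$, hence $H(x) \in D_\epsilon$ for any $\epsilon \in (0, \pi/4)$ and all sufficiently large $x$. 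Then Lemma \ref{lem:asymptotic_Cauchy} gives
\[
x = \widetilde F(H(x)) = H(x) - \frac{1}{H(x)} + O(|H(x)|^{-3}), \qquad x \to \infty.
\]
Equating imaginary parts forces $h(x) = O(g(x)^{-3})$, hence $h(x) \to 0$; equating real parts then gives $g(x) = x + 1/x + O(x^{-3})$, so in particular $g(x) > x$ for all sufficiently large $x$.

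The global inequality $g(x) > x$ on $(0,\infty)$ --- equivalently $g'(x) > 0$ by \eqref{eq:g} --- then follows by a barrier argument: if the closed set $S := \{x > 0 : g(x) \le x\}$ were nonempty, it would be bounded above by the previous asymptotic, so $x_0 := \sup S$ is finite and $g(x_0) = x_0$; but then \eqref{eq:g} gives $g'(x_0) = 0$, hence $(g-x)'(x_0) = -1$, forcing $g(x) < x$ immediately to the right of $x_0$, contradicting $x_0 = \sup S$. For the behaviour as $x \to 0^+$, the analogous no-pole argument --- now using that $\widetilde G$ is entire, so $\widetilde F$ has no zeros in $\C$ --- gives $|H(x)| \to \infty$. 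Since $g$ is now known to be increasing, $g$ is bounded near $0$, so $|H(x)| \to \infty$ forces $h(x) \to \infty$, and the hyperbola bound $g(x) < \pi/(2 h(x))$ then yields $g(x) \to 0$. The step demanding the most care is the large-$x$ asymptotic: verifying $H(x) \in D_\epsilon$ so that Lemma \ref{lem:asymptotic_Cauchy} applies, and then teasing $h \to 0$ out of it, since the Laurent expansion of $\widetilde F$ has real coefficients and does not by itself see the (in fact exponentially small) imaginary part of $H$.
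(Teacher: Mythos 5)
Your proof is correct and follows essentially the same route as the paper: $h'<0$ is read off from \eqref{eq:h}; $g'>0$ comes from the sign equivalence $g'>0\Leftrightarrow g(x)>x$ in \eqref{eq:g} plus a first-touching-point contradiction (at a point where $\omega:=g(x)-x$ vanishes, \eqref{eq:g} forces $\omega'=-1$); and the four limits follow from the hyperbola bound $g(x)h(x)<\pi/2$ for $\Xi\cap\C^-$ together with the absence of zeros and poles of $\widetilde F$ on $\Xi\cup\partial\Xi$. The only cosmetic differences are that you anchor the barrier argument at the expansion $g(x)=x+1/x+O(x^{-3})$ where the paper needs only $g(x)\to\infty$, and that you make explicit the compactness/no-zero arguments the paper compresses into an appeal to the construction of $p_0^+$.
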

\begin{proof}
Equation \eqref{eq:h} readily implies $h'(x)<0$. Because of \eqref{eq:F_asympt} and the construction of $p_0^+$, we can conclude that $\lim_{x\to\infty} g(x)=\infty$, which associates the limit $\lim_{x\to\infty} h(x)=0$, 
because $g(x) - i h(x) \in \Xi \cap \C^-$. To show $g'(x)>0$ it is convenient to introduce the function $ \omega(x) := g(x)-x$. Suppose to the contrary that $g'$ takes a nonpositive value. 
Because $g(x) \to \infty$ as $x \to\infty$, there is at least a strictly increasing sequence converging to $\infty$ on which $g'$ takes positive values. 
Therefore, we can find $x_0, x_1 \in (0,\infty)$ with $x_0 <x_1$ such that $g'(x_0)=0$ and $g'(x)>0$ for all $x \in (x_0,x_1)$. In terms of $\omega$ the former reads $\omega'(x_0)=-1$. 
In view of \eqref{eq:g} it also follows that $\omega(x_0)=0$ and $\omega(x)>0$ for $x \in (x_0,x_1)$.  These (in)equalities obviously contradict. The proof of $g'(x)>0$ is thus complete. 

It remains to prove the last two limits.  It suffices to establish $\lim_{x\to 0^+} h(x) =\infty$, because then $\lim_{x\to 0^+} g(x) =0$ follows from the fact that $g(x) - ih(x) \in \Xi$.  

Suppose to the contrary that $\beta := \lim_{x\to 0^+} h(x) <\infty$. We set $\alpha :=  \lim_{x\to 0^+} g(x) \in [0,\infty)$. Then $\alpha-i\beta \in \Xi \cup \partial \Xi$. Taking the limit in the formula $\widetilde F(g(x) - i h(x)) = x$, we get $\widetilde F(\alpha -i \beta) =0$, a contradiction to the fact that $\widetilde F$ does not have zeros on $\Xi \cup \partial \Xi$.
\end{proof}

\begin{theorem}\label{thm:omega}
We set  
\begin{align*}
\Omega:=\{x+iy \in \C: x,y\in \R,\ x\neq0,\ y>f(x)\} \cup i\R,
\end{align*}
where $f\colon \R\setminus\{0\}\rightarrow (-\infty,0)$ is the real analytic function determined by $f(x) = -h \circ g^{-1}(|x|)$. Then $\Omega$ fulfills the requirement of Definition \ref{def:UI} for $N(0,1)$, i.e., $\widetilde{F}$ is an analytic bijection from $\Omega$ onto $\C^+$. Moreover, $\widetilde{F}$ is a homeomorphism from $\text{cl}(\Omega)$ onto $(\C^+\cup\R) \setminus \{0\}$. 
\end{theorem}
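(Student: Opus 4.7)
The plan is to prove Theorem~\ref{thm:omega} in four steps: (a) verify $\text{cl}(\Omega)\subseteq\Xi$, so that $\widetilde F$ is analytic on $\text{cl}(\Omega)$ by Lemma~\ref{cor:F}; (b) show $\widetilde F(\Omega)\subseteq\C^+$; (c) establish bijectivity $\widetilde F\colon\Omega\to\C^+$ by applying the argument principle on bounded subdomains; and (d) deduce continuity of $\widetilde F^{-1}$.

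For (a), the identity $r^2\sin 2\theta=2(\re z)(\im z)$ translates $z\in\Xi$ in the fourth quadrant into $(\re z)(\im z)>-\pi/2$. Since $p_0^+\subseteq\Xi$ by Proposition~\ref{prop:curve}, we get $g(x)h(x)<\pi/2$ for all $x>0$. For any $z=x+iy\in\text{cl}(\Omega)$ with $x>0$ and $f(x)\le y\le 0$, writing $u=g^{-1}(x)$ one has $xy\ge -xh(u)=-g(u)h(u)>-\pi/2$; a symmetric computation treats $x<0$, and $\C^+\cup\R\cup i\R\subseteq\Xi$ is clear. For (b), one shows $\widetilde F$ takes no real values on $\Omega$; being nonvanishing in $\C$ (as $\widetilde G$ is entire), this together with connectedness of $\Omega$ and $\widetilde F(\C^+)\subseteq\C^+$ (the Herglotz property) forces $\widetilde F(\Omega)\subseteq\C^+$. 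To check the no-real-values claim: on $\R\setminus\{0\}$ equation~\eqref{eq:Stieltjes} gives $\im\widetilde F(x)=-\im\widetilde G(x)/|\widetilde G(x)|^2>0$; on $i\R$, Lemma~\ref{lem:F(ix)} gives $\widetilde F(i\R)=i(0,\infty)$; and on $\Omega\cap\C^-$, any $z_0$ with $\widetilde F(z_0)\in\R\setminus\{0\}$ would lie on $p_0^+\cup p_0^-=\partial\Omega$ by Proposition~\ref{prop:curve} and its reflection, contradicting $z_0\in\Omega$.

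The key step (c) is to apply the argument principle to the bounded Jordan domains $\Omega_N:=\Omega\cap\{|z|<N\}$. For generic large $N$, the boundary $\partial\Omega_N$ consists, in counterclockwise order, of the large upper arc $A_N^{\mathrm{top}}$ of $\partial B_N$ lying in $\C^+$, a piece of $p_0^-$, a small arc $A_N^{\mathrm{bot}}$ of $\partial B_N$ near $-iN$, and a piece of $p_0^+$. Lemma~\ref{lem:asymptotic_Cauchy} gives $\widetilde F(z)=z+O(1/z)$ uniformly on $A_N^{\mathrm{top}}$, so its image is close to the upper semicircle of radius $N$. Proposition~\ref{prop:curve} makes $\widetilde F$ map the two $p_0^\pm$-pieces monotonically onto real intervals exhausting $\R\setminus\{0\}$ as $N\to\infty$. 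Combining \eqref{eq:CauchyND} with the trivial bound $|G(z)|=O(1/N)$ on $A_N^{\mathrm{bot}}$ yields $|\widetilde F|=O(e^{-N^2/2})$ there. For any fixed $w\in\C^+$ and $N$ sufficiently large, $\widetilde F(\partial\Omega_N)$ has winding number $1$ around $w$: the main loop (upper semicircle plus the two real segments from near $\pm N$ toward $0$) encloses $w$ once, while the tiny loop $\widetilde F(A_N^{\mathrm{bot}})$ lies in an exponentially small neighborhood of $0$ that does not contain $w$. The argument principle thus gives a unique solution of $\widetilde F(z)=w$ in each such $\Omega_N$, hence in $\Omega$.

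Step (d) is routine: $\widetilde F$ is analytic on $\text{cl}(\Omega)\subseteq\Xi$ by (a), is a holomorphic bijection $\Omega\to\C^+$ by (c) and hence a biholomorphism, and maps $\partial\Omega=p_0^+\cup p_0^-$ bijectively onto $\R\setminus\{0\}$ by Proposition~\ref{prop:curve}. Thus $\widetilde F$ is a continuous bijection from $\text{cl}(\Omega)$ onto $(\C^+\cup\R)\setminus\{0\}$; continuity of the inverse follows from the holomorphic inverse function theorem, which applies at each boundary point since $\widetilde F'\ne 0$ on $p_0^\pm$ by Proposition~\ref{prop:curve}. The principal obstacle is the winding-number analysis in step (c): one must rule out any spurious contribution of $\widetilde F(A_N^{\mathrm{bot}})$, on which the phase of $e^{-z^2/2}$ oscillates rapidly, by showing that the image is nonetheless confined to a disk of radius $\sim e^{-N^2/2}$ about the origin and hence contributes zero winding around any fixed $w\in\C^+$ once $N$ is large enough.
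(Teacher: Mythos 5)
Your proposal is essentially correct and reaches the theorem by a route that is close in spirit to the paper's but differs in its technical choices. The paper truncates $\Omega$ by vertical lines $\re z=\pm R$ together with a large semicircle, obtaining a Jordan curve $\gamma_R$ that passes through $\infty$ (handled by a M\"obius change of variable), proves $\widetilde F$ univalent on $\gamma_R$ via Lemma \ref{lem:F-transform}, and then invokes the Darboux boundary-correspondence theorem and Carath\'eodory's theorem. You instead truncate by circles $|z|<N$ and run the argument principle directly, and you replace Carath\'eodory by the local inverse function theorem at boundary points (legitimate here because $p_0^\pm$ is an analytic arc with $\widetilde F'\neq0$ on it, by Proposition \ref{prop:curve}). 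Your version buys two things: it never needs univalence of $\widetilde F$ on the whole boundary curve (injectivity falls out of the winding-number count), and the small arc $A_N^{\mathrm{bot}}$ with the bound $|\widetilde F|=O(e^{-N^2/2})$ neatly sidesteps the passage through $\infty$, playing the role of Lemma \ref{cor:F}\ref{item:F3}. The price is that you must control the geometry of $\partial B_N\cap\mathrm{cl}(\Omega)$, which the paper's vertical truncation gets for free from the strict monotonicity of $g$.

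That geometric control is the one genuine soft spot: you assert ``for generic large $N$'' that $\partial\Omega_N$ has exactly the four described pieces, i.e.\ that $|z|=N$ meets $p_0^+$ in exactly two points. If it met it in more points, $\Omega\cap B_N$ could be disconnected and extra boundary arcs (lying in $\Omega$, hence mapping into $\C^+$) could spoil the winding count. This is fixable but needs an argument: e.g., show that on $\{u: g(u)\ge N-1\}$ one has $\frac{d}{du}|H(u)|^2=2(gg'+hh')>0$ and on $\{u: h(u)\ge N-1\}$ one has $gg'+hh'<0$ (both follow from the ODEs \eqref{eq:g}--\eqref{eq:h} together with $g(u)h(u)<\pi/2$ and the asymptotics of $g,h$), while for intermediate $u$ the bound $g(u)h(u)<\pi/2$ forces $|H(u)|^2\le (N-1)^2+\pi^2/4<N^2$; alternatively, simply adopt the paper's truncation by $|\re z|\le R$. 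One further small omission in step (b): Proposition \ref{prop:curve} and its reflection only exclude $\widetilde F(z_0)\in(0,\infty)$ for $z_0$ in the fourth quadrant and $\widetilde F(z_0)\in(-\infty,0)$ for $z_0$ in the third; to exclude the cross case ($z_0\in\Omega\cap\C^-$ with $\re z_0>0$ but $\widetilde F(z_0)<0$) you should also invoke Lemma \ref{cor:F}(2), which gives $\re[\widetilde F]>0$ there. With these repairs the proof is complete.
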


\begin{proof}

First note that $\Omega$ is exactly the domain that has boundary $p_0^+ \cup p_0^- $ and contains $\C^+$ as a subset. 

Because $\partial \Xi \cup \{\infty\}$ is not a simple closed curve in the Riemann sphere, we introduce an approximating simple closed curve $\gamma_R$ in the Riemann sphere consisting of $\{z \in p_0^+ \cup\{\infty\} \cup p_0^-: |\re(z)| \le R \}$,  the semicircle $\{R e^{i\theta}: 0 \le \theta \le \pi\}$ and two vertical line segments $R+i[-f(R), 0]$ and $-R+i[-f(R), 0]$, where $\infty$ stands for the point corresponding to $\lim_{x\to0^+} H(x)$ and $R>0$. Note that, as a consequence of Lemma \ref{cor:F} \ref{item:F3}, $\widetilde{F}$ can be regarded as a continuous function on closure (in the Riemann sphere) of the Jordan domain surrounded by $\gamma_R$. This fact is important in the next paragraph when we apply the Darboux theorem. 

We prove that $\widetilde{F}$ is univalent on  $\gamma_R$ for sufficiently large $R>0$. Because of the symmetry with respect to the imaginary axis, it suffices to prove the univalence on $\gamma_R \cup \{z: \re(z)>0\}$. First, $\widetilde{F}$ is univalent on $p_0^+$ by the construction of $p_0^+$. Also, we can take $r,R>0$ with $ r, g(r) < R/4$ large enough so that, by Lemma \ref{lem:F-transform},  $\widetilde{F}$ is univalent in $\gamma_R \cap \{x+iy: |x| \ge g(r) \text{~or~} y \ge r\}$ and that, by \eqref{eq:F_asympt},  $|\widetilde{F}(z)- z| < \frac{|z|}{2}$ holds for all $|z| \ge R, z \in \gamma_R$. In this situation one can see the univalence on the whole $\gamma_R$. This furthermore implies that $\widetilde{F}$ is a bijection from the Jordan domain surrounded by $\gamma_R$ onto the Jordan domain surrounded by $\widetilde{F}(\gamma_R)$ according to the Darboux theorem,  see e.g.\ \cite[Exercise 2.3-4]{Pom92} or \cite[Corollary 9.5]{Pom75}.\footnote{These references assume the Jordan curve to be contained in $\C$. Although $\gamma_R$ passes $\infty$, it can be suitably mapped to the complex plane, e.g.\ by the map $T\colon (\C \cup\{\infty\})\setminus\{c\} \to \C, T(z)= 1/(z-c)$ with a fixed $c \in \C\setminus \Xi$. Then \cite[Exercise 2.3-4]{Pom92} or \cite[Corollary 9.5]{Pom75} can be applied to the function $\widetilde{F} \circ T^{-1}$ which maps the Jordan curve $T(\gamma_R) \subset \C$ bijectively to the Jordan curve $\widetilde{F}(\gamma_R) \subset \C$.} 
By letting $R\to\infty$, we conclude that $\widetilde{F}$ is an analytic bijection from $\Omega$ onto $\C^+$. 

Finally, by Carath\'eodory's theorem \cite[Theorem 2.6]{Pom92} and the fact that $\gamma_R$ is a Jordan curve in the Riemann sphere, $\widetilde{F}$ is a homeomorphism from $\text{cl}(\Omega)$ onto $(\C^+\cup\R) \setminus \{0\}$. 
\end{proof}

\begin{remark} \label{rem:UI}   Theorem \ref{thm:omega} implies the fact $N(0,1) \in {\bf UI}$ that follows easily from the work \cite{BBLS11}, where probability measures $\mu_c, c\in (-1,0)$, called the Askey-Wimp-Kerov distributions, are shown to be in \textbf{UI}. Because class \textbf{UI} is weakly closed (see \cite[p. 2763]{AH13}) and $N(0,1)$ is the weak limit of $\mu_c$ as $c\to0^-$, we conclude that $N(0,1)$ also belongs to \textbf{UI}. The strategy there for proving $\mu_c \in \mathbf{UI}$ was to construct, for each $t>0$, a simple curve $p_t^c$ which is symmetric with respect to the imaginary axis, passing through a unique point in $i\R$ and so that the reciprocal Cauchy transform maps $p_t^c$ bijectively onto $\R+it$, see \cite[Lemma 3.8]{BBLS11} for the construction of $p_t^c$. 

However, the domain $\Omega$ was not investigated in \cite{BBLS11}. By contrast, our method directly constructed the boundary $p_0^+ \cup p_0^-$ of $\Omega$ as the preimage of $\R \setminus\{0\}$ by the map $\Restr{\widetilde F}{\Xi}$ (for the interested reader, the curves $\im[\widetilde{F}]=t$ for different $t$'s are shown in Figure \ref{fig2}). Further details on the boundary $p_0^+ \cup p_0^-$ will be clarified in Theorems \ref{thm:fine_asymptotics} and \ref{thm:main_infty} below. 
\end{remark}

\begin{figure}[h!]
\begin{center}
\begin{minipage}{0.45\hsize}
\begin{center}
\begin{overpic}[width=6cm]{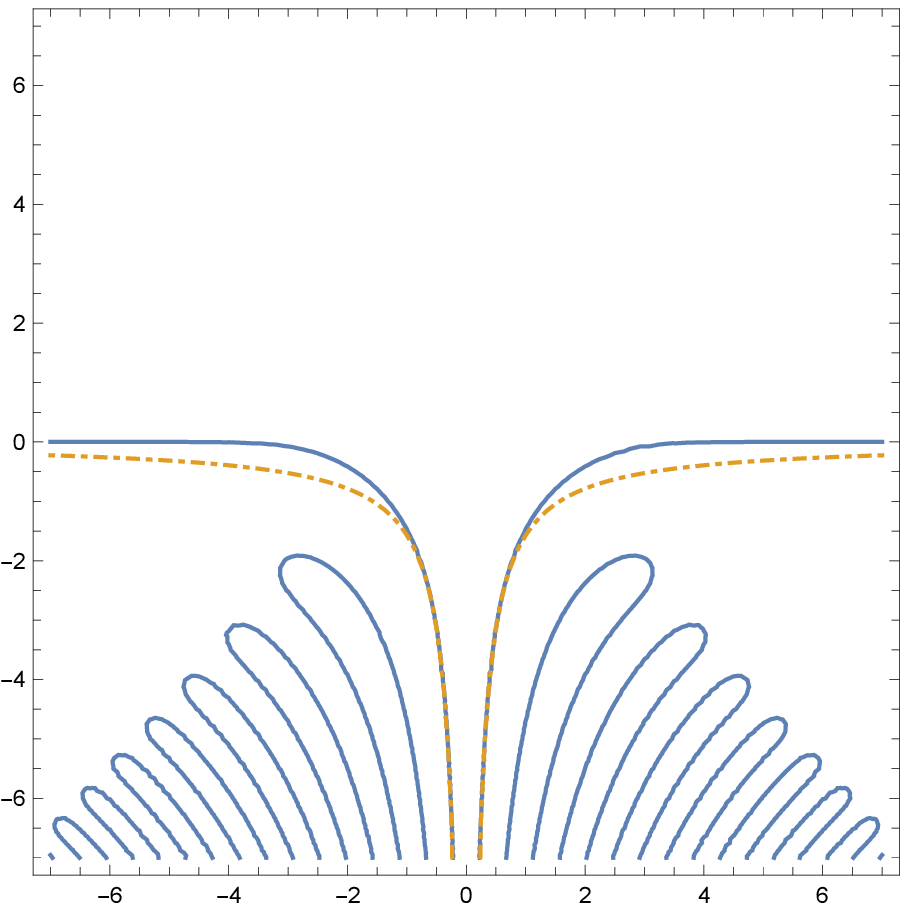}
\put(20,55){$p_0^-$}
\put(75,55){$p_0^+$}
\end{overpic}
\end{center}
\caption{the curves $\im[\widetilde{F}]=0$ (undashed) and $\partial \Xi$ (dashed). Outside $\Xi$ there are more preimages (different from $p_0^\pm$) of $\R \setminus\{0\}$ by the mapping $\widetilde{F}$. When the real part is sufficiently small, the curves $p_0^+ \cup p_0^-$ and $\partial \Xi$ are close. } \label{fig1}
\end{minipage}
\hspace{5mm}
\begin{minipage}{0.40\hsize}
\begin{center}
\begin{overpic}[width=6cm]{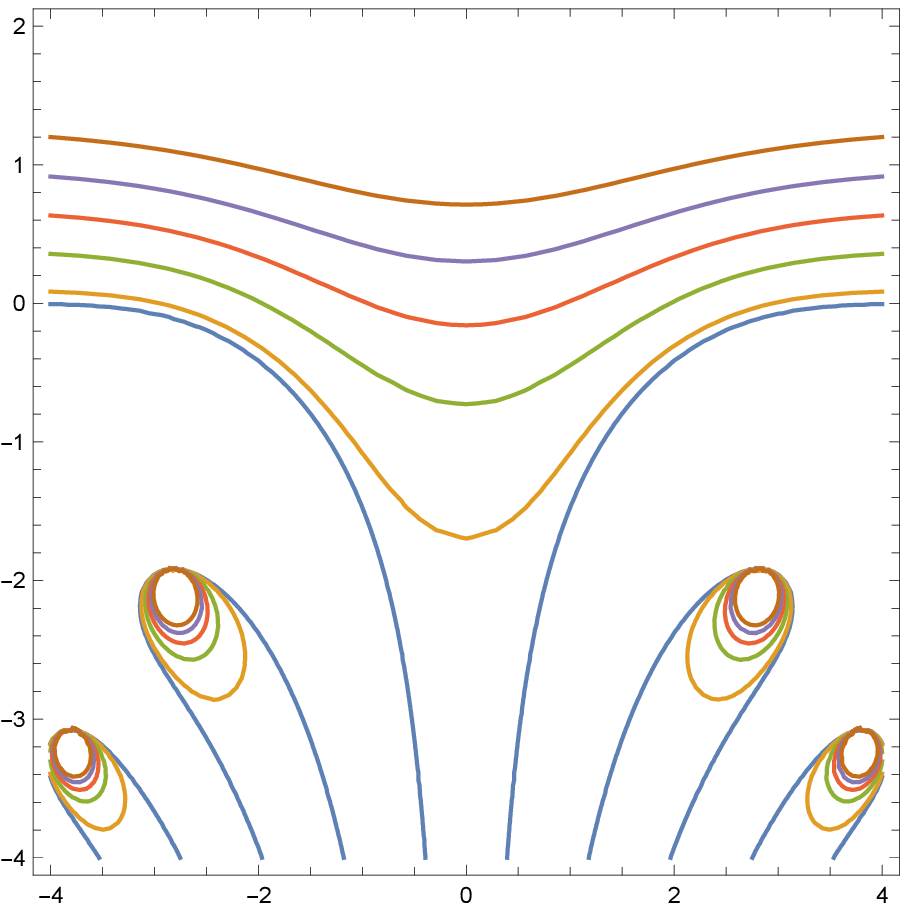}
\put(20,55){$p_0^-$}
\put(75,55){$p_0^+$}
\end{overpic}
\end{center}
\caption{the curves $\im[\widetilde{F}]=t$ for $t=0$ (blue), $t=0.1$ (yellow), $t=0.4$ (green), $t=0.7$ (red), $t=1$ (purple), $t=1.3$ (brown).  \vspace{9mm} }\label{fig2} 
\end{minipage}
\end{center}

\begin{center}
\begin{minipage}{0.45\hsize}
\begin{center}
\begin{overpic}[width=6cm]{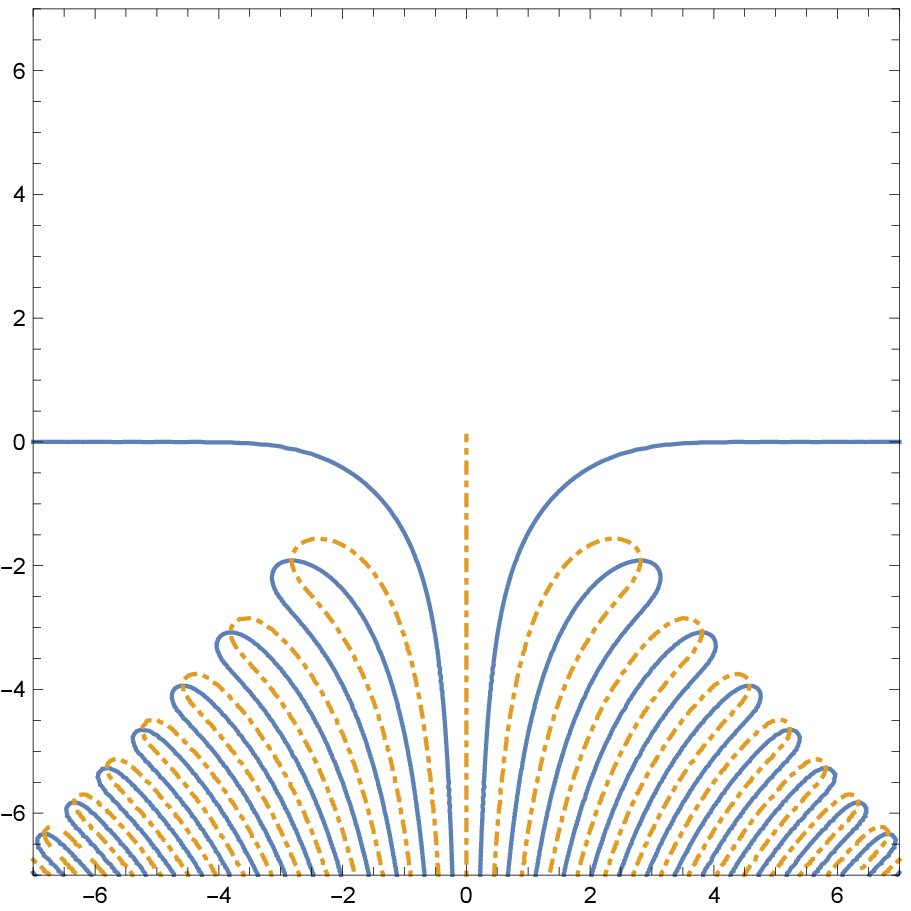}
\put(20,55){$p_0^-$}
\put(75,55){$p_0^+$}
\end{overpic}
\end{center}
\caption{the curves $\im[\widetilde{G}]=0$ (undashed) and $\re[\widetilde{G}]=0$ (dashed). The intersection of them is the poles of $\widetilde{F}$. } \label{fig3}
\end{minipage}
\hspace{5mm}
\begin{minipage}{0.40\hsize}
\begin{center}
\vspace{8mm}
\begin{overpic}[width=6cm]{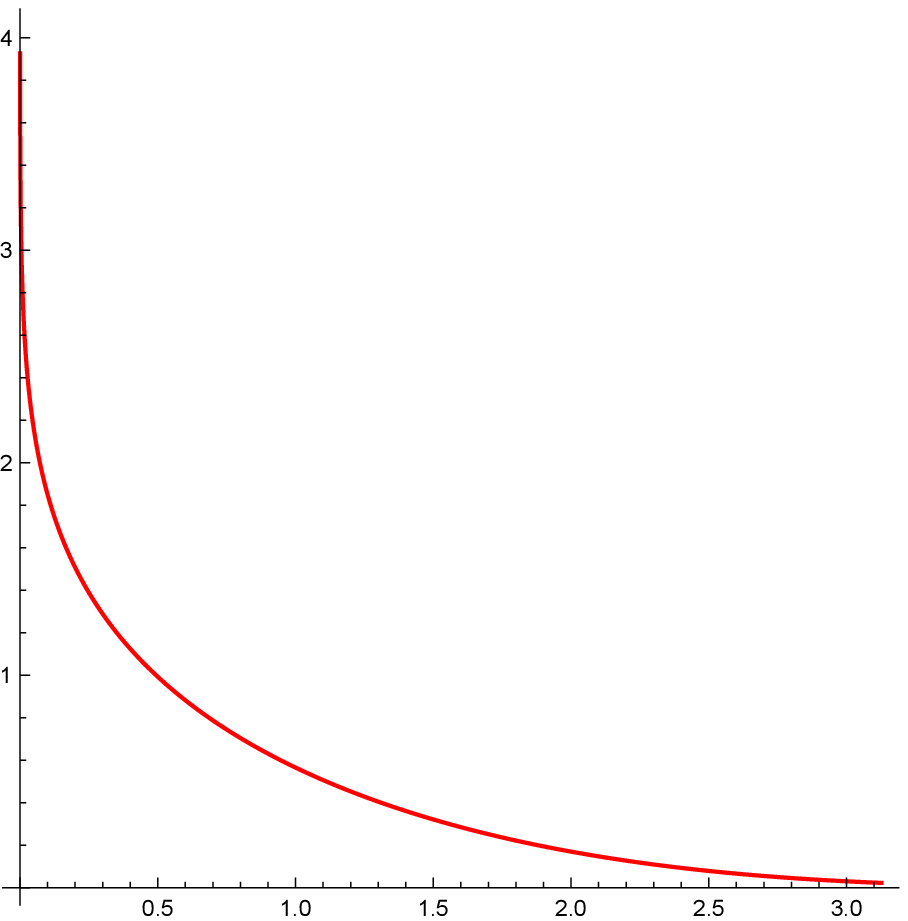}
\end{overpic}
\end{center}
\caption{the graph of $h(x)$. }\label{fig4}   \vspace{18mm} 
\end{minipage}
\end{center}
\end{figure}

\section{Proofs of the main results}\label{sec:freeLevy}

According to Theorem \ref{thm:omega}, we can define the analytic compositional inverse function $\widetilde{F}^{-1}\colon\C^+ \to \Omega$ which extends to a homeomorphism from $(\C^+\cup\R) \setminus \{0\}$ onto $\text{cl}(\Omega)$. It is even analytic on  $\R\setminus \{0\}$ with values 
\begin{equation}\label{eq:inverse_F}
\widetilde{F}^{-1} (x)  = \text{sign}(x)g(|x|) - i h(|x|)
\end{equation}
 according to the previous section and by symmetry. On the other hand, by Lemma \ref{lem:F-transform}, the function $\widetilde{F}$ is a bijection from $D_{\epsilon,R}$ onto its range that contains $D_{\epsilon',R'}$ for sufficiently large $R$. This allows us to get an analytic continuation of $\widetilde{F}^{-1}$ to the domain $(\C^+\cup\R) \cup D_{\epsilon',R'} \setminus \{0\}$. 

\begin{proof}[Proof of Theorem \ref{thm:freeLevy}]
Recall that the Voiculescu transform $\widetilde{\varphi}(w) := \widetilde{F}^{-1}(w) -w$ is an analytic function from $(\C^+\cup \R)\setminus\{0\}$ to $\C^-\cup \R$. In the Pick--Nevanlinna representation 
\begin{align}\label{eq:VT}
\widetilde{\varphi}(z)=b+\int_\R \frac{1+zx}{z-x} \, \tau(\dd x),  \qquad z \in \C^+, 
\end{align}
where $b\in \R$ and $\tau$ is a finite measure on $\R$, the Stieltjes inversion formula (see e.g.\ \cite[Theorems F.2, F.6]{Sch12}) yields 
\begin{equation} \label{eq:Stieltjes_h}
(1+x^2)\mathbf1_{\R\setminus\{0\}}(x)\tau(\dd x)=-\frac{1}{\pi} \im [\widetilde{\varphi}(x)]\,\dd x = \frac{1}{\pi} h(|x|)\,\dd x. 
\end{equation}
According to \eqref{eq:freeLM} the free L\'{e}vy measure of $N(0,1)$ is given as desired. 
\end{proof}

\begin{remark}
Because $N(0,1)$ is symmetric, $\widetilde{\varphi}(i)  \in i \R$ and hence the number $b = \re[\widetilde{\varphi}(i)]$ vanishes. Moreover, we can show that the semicircular component $\tau(\{0\})$ vanishes too. Indeed, according to \eqref{eq:G(ix)}, we have $\widetilde{F}(ix) \sim i (1/\sqrt{2\pi}) e^{-x^2/2}$ as $x\to -\infty$. Using the formula $\tau(\{0\})= \lim_{y \to 0^+} iy \widetilde{\varphi}(iy)$ (see e.g.\ \cite[Theorem F.2]{Sch12}) we deduce that 
\[
\tau(\{0\}) = \lim_{y\to0^+} iy \widetilde{F}^{-1}(iy) = \lim_{x\to -\infty} \widetilde{F}(ix) ix =0 
\]
as desired. 
\end{remark}

Let $\kappa_n$ be the $n$-th free cumulant of $N(0,1)$ below. Because $N(0,1)$ is symmetric, the odd free cumulants vanish. According to \cite[p. 3683]{BBLS11}, some even free cumulants are given as follows: 
\[
\kappa_2=1, \quad \kappa_4=1,  \quad \kappa_6=4 \quad \text{and}\quad \kappa_8=27.
\]
The following fact is a key for understanding the asymptotics of $h(x)$ as $x \to \infty$. We can prove it analogously to \cite[Theorem 1.3 and Proposition A.3]{BG06}. For the reader's convenience a self-contained proof is provided in Appendix \ref{appendix}.  
\begin{lemma} \label{lem:F^{-1}(z)_infty}
For every fixed $N \in \N$ and $\epsilon'\in(0,\pi/4)$ we have 
\begin{equation}  \label{eq:asymptotic_F^{-1}}
\widetilde{F}^{-1}(w) = w + \sum_{n=1}^N \frac{\kappa_{2n}}{w^{2n-1}} + o\left(\frac{1}{w^{2N-1}}\right)  \quad \text{as} \quad w\to\infty, \ w \in  D_{\epsilon'}.  
\end{equation}
\end{lemma}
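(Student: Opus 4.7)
The plan is to invert the asymptotic expansion of $\widetilde{G}$ from Lemma \ref{lem:asymptotic_Cauchy} in two stages: first to obtain an asymptotic expansion of $\widetilde{F}$ on the sector $D_\epsilon$, and then to invert this series to get an expansion of $\widetilde{F}^{-1}$ on $D_{\epsilon'}$. The identification of the resulting coefficients with the free cumulants is then a consequence of the classical moment-cumulant relations together with the uniqueness of asymptotic expansions.

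First step. Starting from $\widetilde{G}(z) = z^{-1}\bigl(1 + \sum_{n=1}^N m_{2n} z^{-2n}\bigr) + O(z^{-2N-3})$ on $D_\epsilon$, expand the reciprocal using the finite geometric series to obtain
\[
\widetilde{F}(z) = z + \sum_{n=1}^{N} \frac{a_{2n}}{z^{2n-1}} + O(z^{-2N-1}), \qquad z\to\infty,\ z\in D_\epsilon,
\]
where the coefficients $a_{2n}$ are explicit polynomials in $m_2,\ldots,m_{2n}$ (in particular $a_2=-1$). Only odd negative powers appear because $\widetilde{G}$ contains only odd negative powers, hence so does its reciprocal.

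Second step. Fix $0<\epsilon_0<\epsilon'$ and apply Lemma \ref{lem:F-transform} to get an $R$ so large that $\widetilde{F}$ is univalent on $D_{\epsilon_0,R}$ with $D_{\epsilon',R'}\subset \widetilde{F}(D_{\epsilon_0,R})$; thus $\widetilde{F}^{-1}$ is analytic on $D_{\epsilon',R'}$ with values in $D_{\epsilon_0,R}$. Letting $\widetilde{F}^{-1}(w) = w + \psi(w)$, the identity $w = \widetilde{F}(\widetilde{F}^{-1}(w))$ yields
\[
\psi(w) = -\sum_{n=1}^{N} \frac{a_{2n}}{\widetilde{F}^{-1}(w)^{2n-1}} + O\bigl(w^{-2N-1}\bigr).
\]
Starting from the crude bound $\widetilde{F}^{-1}(w) = w + O(1/w)$ (which follows from $\widetilde{F}(z) = z + O(1/z)$ and the univalence above) and iteratively substituting into the right-hand side, after finitely many steps one obtains an asymptotic expansion
\[
\widetilde{F}^{-1}(w) = w + \sum_{n=1}^{N} \frac{b_{2n}}{w^{2n-1}} + O(w^{-2N-1}), \qquad w\to\infty,\ w\in D_{\epsilon'},
\]
with coefficients $b_{2n}$ determined from the $a_{2n}$ by the standard Lagrange-inversion formulas; the error estimate is preserved at each step because the sector $D_{\epsilon'}$ is closed under the operations performed.

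Third step. To identify $b_{2n}=\kappa_{2n}$, recall that on $\C^+$ the Voiculescu transform admits the well-known formal expansion $\widetilde\varphi(w) = \widetilde F^{-1}(w) - w \sim \sum_{n\ge 1} \kappa_n/w^{n-1}$; this is nothing but the defining relation of the free cumulants of $N(0,1)$ combined with the moment-cumulant formula. Since $\C^+\cap D_{\epsilon'}$ is nonempty and asymptotic expansions are unique, the coefficients in our sectorial expansion must coincide with the known ones; the odd-indexed coefficients vanish since $N(0,1)$ is symmetric. This gives the desired formula, and the $O(w^{-2N-1})$ error bound is in particular $o(w^{-2N+1})$.

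The main bookkeeping obstacle is propagating the error term cleanly through the iteration in the second step while staying inside the sector $D_{\epsilon'}$; this is however routine since each substitution only involves multiplication by a bounded rational function of $w$ whose error is controlled by the previous step. The coefficient identification in the third step requires no combinatorial computation beyond invoking the standard relation between $\widetilde{\varphi}$ and the free cumulants.
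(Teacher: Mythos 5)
Your proposal is correct and follows essentially the same route as the paper's proof in Appendix A: invert the moment expansion of $\widetilde{G}$ from Lemma \ref{lem:asymptotic_Cauchy} to get a Laurent-type expansion of $\widetilde{F}$ on $D_\epsilon$, bootstrap the identity $w=\widetilde{F}(\widetilde{F}^{-1}(w))$ finitely many times to transfer it to $\widetilde{F}^{-1}$ on $D_{\epsilon'}$, and identify the resulting coefficients with the free cumulants via the standard formal inverse-series (moment--cumulant) relation. The only cosmetic differences are that you invoke Lemma \ref{lem:F-transform} explicitly for the existence of the inverse and claim the slightly stronger error $O(w^{-2N-1})$ (obtainable by carrying one extra term), both of which are fine.
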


We provide a proof of Theorem \ref{thm:fine_asymptotics_main} \ref{item:h(x)_infty_main} below. Since the proof requires estimates on $g$ too, we expand the statement of Theorem \ref{thm:fine_asymptotics_main} \ref{item:h(x)_infty_main}  as follows. 

\begin{theorem} \label{thm:fine_asymptotics} The following asymptotic behaviors hold. 

\begin{enumerate}[label=\rm(g${}_\infty$),leftmargin=1.2cm]
\item\label{item:g(x)_infty} 
$\displaystyle 
g(x) = x + \sum_{n=1}^N \frac{\kappa_{2n}}{x^{2n-1}} + o\left(\frac{1}{x^{2N-1}}\right)  \quad \text{as} \quad x\to\infty  
$\quad for every fixed\quad $N \in \N$.  
\end{enumerate}

\begin{enumerate}[label=\rm(h${}_\infty$),leftmargin=1.2cm]
\item \label{item:h(x)_infty_main} 
$\displaystyle h (x) = \frac{1}{e} \sqrt{\frac{\pi}{2}}x^2 e^{-\frac{x^2}{2} } (1 + O(x^{-2}))$\quad  as \quad $x\to\infty$. 
\end{enumerate}
\end{theorem}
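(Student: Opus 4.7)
The asymptotic (g$_\infty$) is an immediate corollary of Lemma \ref{lem:F^{-1}(z)_infty}: specializing $w = x>0$ (which lies in $D_{\epsilon'}$) and using \eqref{eq:inverse_F}, one has $g(x) - i h(x) = x + \sum_{n=1}^N \kappa_{2n}/x^{2n-1} + o(x^{-(2N-1)})$. Comparing real parts yields (g$_\infty$); comparing imaginary parts only yields $h(x) = o(x^{-(2N-1)})$ for every $N$, which is too weak to capture (h$_\infty$). For the latter, the plan is to exploit the exact identity $\widetilde{G}(H(x)) = 1/x$ and perturb around a real-valued polynomial approximation.

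Fix a large $N\in\N$ and set $w_0(x) := x + \sum_{n=1}^N \kappa_{2n}/x^{2n-1}$, which is real for $x > 0$. The free-cumulant/moment inversion of $\sum_{n\ge0} m_{2n}/w^{2n+1} = 1/x$ gives $\sum_{n=0}^{N-1} m_{2n}/w_0(x)^{2n+1} = 1/x + O(x^{-2N-1})$, and Lemma \ref{lem:asymptotic_Cauchy} then yields $\re[\widetilde{G}(w_0(x))] = 1/x + O(x^{-2N-1})$. Crucially, \eqref{eq:Stieltjes} pins down $\im[\widetilde{G}(w_0(x))] = -\sqrt{\pi/2}\, e^{-w_0(x)^2/2}$ \emph{exactly}, so
\[
\widetilde{G}(H(x)) - \widetilde{G}(w_0(x)) = O(x^{-2N-1}) + i\sqrt{\pi/2}\, e^{-w_0(x)^2/2}.
\]
Linearizing $\widetilde{G}$ at $w_0$ (using $\widetilde{G}'(w_0) = -1/w_0^2 + O(1/w_0^4) + i\sqrt{\pi/2}\,w_0\, e^{-w_0^2/2}$ from Lemma \ref{lem:asymptotic_Cauchy} and differentiation of \eqref{eq:Stieltjes}), and solving via the inverse function theorem---legitimate because $\widetilde{G}'(w_0) \ne 0$ and $H(x)$ is uniquely determined near $w_0(x)$ by Proposition \ref{prop:curve}---yields
\[
H(x) - w_0(x) = O(x^{-(2N-1)}) - i\sqrt{\pi/2}\, w_0(x)^2 e^{-w_0(x)^2/2}\bigl(1 + O(x^{-2})\bigr).
\]
Since $w_0$ is real, the $O(x^{-(2N-1)})$ term contributes nothing to the imaginary part; expanding $w_0^2 = x^2 + 2 + O(x^{-2})$ and $e^{-w_0^2/2} = e^{-1}e^{-x^2/2}(1 + O(x^{-2}))$ then produces (h$_\infty$).

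The principal technical obstacle is to control the higher-order Taylor remainders and the cross-terms from the exponentially small imaginary correction in $1/\widetilde{G}'(w_0)$ acting on the real polynomial error. Using $\widetilde{G}^{(k)}(w_0) = O(w_0^{-k-1})$ and the leading-order bound $|H - w_0| = O(x^{-(2N-1)}) + O(x^2 e^{-x^2/2})$, the $k$-th Taylor remainder for $k\ge 2$ is of order $x^{2k-2}e^{-k\cdot x^2/2}$ or $x^{-(2kN-3k)}$, each much smaller than $x^2 e^{-x^2/2}$; similarly, the imaginary part of $1/\widetilde{G}'(w_0)$ times the $O(x^{-2N-1})$ real error gives only $O(x^{4-2N} e^{-x^2/2})$, absorbed into the factor $(1+O(x^{-2}))$ as soon as $N$ is taken sufficiently large. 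These bounds, together with the a priori placement of $H(x)$ close to $w_0(x)$ coming from (g$_\infty$), legitimize the perturbative inversion and complete the proof.
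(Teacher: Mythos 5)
Your derivation of \ref{item:g(x)_infty} matches the paper's: it is the real part of Lemma \ref{lem:F^{-1}(z)_infty} evaluated at real $x$, via \eqref{eq:inverse_F}. For \ref{item:h(x)_infty_main} you take a genuinely different route (perturbative inversion of $\widetilde G$ around the real point $w_0(x)$, versus the paper's sign-sandwich for the unique zero of $y\mapsto \im[\widetilde F(x+iy)]$ on vertical lines), and the structural idea --- that $\im[\widetilde G(w_0)]$ is known \emph{exactly} from \eqref{eq:Stieltjes} because $w_0$ is real --- is the right one. But the error analysis as written does not close.

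The concrete problem is the claim that the $k$-th Taylor remainder, bounded by $|\widetilde G^{(k)}(w_0)|\,|H-w_0|^k = O(x^{-(2kN-3k)})$ in its polynomial branch, is ``much smaller than $x^2 e^{-x^2/2}$.'' It is not: a quantity decaying polynomially is eventually much \emph{larger} than any Gaussian-decaying quantity, for every fixed $N$ and $k$. Since the target $h(x)=-\im[H(x)]$ is itself exponentially small, an absolute polynomial bound on the remainder destroys all information about the imaginary part. To rescue the argument you must split each Taylor term $\widetilde G^{(k)}(w_0)\delta^k$ (with $\delta:=H-w_0=\delta_1+i\delta_2$) into real and imaginary parts and observe two facts: (a) $\im[\widetilde G^{(k)}(w_0)]$ is exponentially small because $w_0$ is real (it is a derivative of $-\sqrt{\pi/2}\,e^{-u^2/2}$), so the terms $\im[\widetilde G^{(k)}(w_0)]\re[\delta^k]$ are harmless; and (b) $\im[\delta^k]$ always carries at least one factor of $\delta_2$, so the terms $\re[\widetilde G^{(k)}(w_0)]\im[\delta^k]$ are \emph{relative} corrections of size $O(x^{-1}|\delta_1|)^{k-1}$ to the main linear term $\re[\widetilde G'(w_0)]\delta_2$, not absolute errors to be compared with $x^2e^{-x^2/2}$. (The same care is needed for the integral-form remainder, where $\im[\widetilde G^{(k)}]$ is evaluated off the real axis.) Relatedly, your ``a priori'' bound $|H-w_0| = O(x^{-(2N-1)})+O(x^2e^{-x^2/2})$ smuggles in the conclusion: Lemma \ref{lem:F^{-1}(z)_infty} only gives $|H-w_0|=o(x^{-(2N-1)})$ with no separate exponential control on $\delta_2$, and the argument must be checked to close with that weaker input (it does, via (a)--(b), but you have not shown this). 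The paper sidesteps all of this by writing $a(x,y)=a(x,0)+a_y(x,\theta y)\,y$ exactly (first-order mean value form, no higher remainders) and trapping $f(x)$ between two explicit comparison curves $f_{\pm c}$ by a sign argument; you may want to adopt that device, or carry out the real/imaginary bookkeeping above.
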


\begin{proof} 
\ref{item:g(x)_infty} is just the real part of the formula in Lemma \ref{lem:F^{-1}(z)_infty}. 

For the proof of \ref{item:h(x)_infty_main}, we set $a(x,y):=\im[\widetilde F(x + iy)]$. 
It follows from \eqref{eq:Stieltjes} and Lemma \ref{lem:asymptotic_Cauchy} that, as $x\to \infty$, 
\begin{align}
a(x,0) &= \frac{-\im[\widetilde G(x)]}{|\widetilde G(x)|^2} = \sqrt{\frac{\pi}{2}}x^2e^{-\frac{x^2}{2}} (1 + O(x^{-2})) \quad \text{and}   \label{eq:asymp1} \\
a_y(x,y) &= \im[ i \widetilde F ' (x + iy)] = \re[\widetilde F ' (x + iy)] = 1 + O(x^{-2}),   \label{eq:asymp2}
\end{align}
as long as $x+i y \in D_{\epsilon}$.  
By Taylor's theorem, for every $x >0$ and $ y \in [-1,0]$ there exists $\theta \in (0,1)$ such that $a(x,y) = a(x,0) + a_y (x,\theta y) y $, so that 
\[
a(x,y) =  \sqrt{\frac{\pi}{2}}x^2e^{-\frac{x^2}{2}} (1 + O(x^{-2})) +(1 + O(x^{-2})) y 
\]
If we set the function
\[
f_c(x) := - \sqrt{\frac{\pi}{2}}x^2e^{-\frac{x^2}{2}} (1 + c x^{-2}), \qquad c \in \R, 
\]
then we get 
\[
a(x, f_c(x))  =  \sqrt{\frac{\pi}{2}}x^2e^{-\frac{x^2}{2}} (O(x^{-2}) - c x^{-2}(1+O(x^{-2}))),   
\]
where the $O(x^{-2})$'s are all independent of $c$. 
This implies, for a sufficiently large (fixed) $c>0$, that 
\[
a(x, f_c(x))<0 < a(x,f_{-c}(x)) 
\] 
for all sufficiently large $x$ (such that $f_c(x) >-1 $). 
From the results in Subsection \ref{subsec:boundary}, for $x>0$, the function  $f(x)$ introduced in Theorem \ref{thm:omega} is a unique solution $y \in(-\pi/(2x),0)$ to the equation $a(x,y)=0$. We therefore have $f_c(x) < f(x) < f_{-c}(x)$ for sufficiently large $x >0$, i.e.\  
\begin{equation}\label{eq:asymp_f}
f(x) = - \sqrt{\frac{\pi}{2}}x^2e^{-\frac{x^2}{2}} (1 + O(x^{-2})), \qquad x\to\infty.  
\end{equation}
Finally, using $g(x) = x + \frac1{x} + O(x^{-3})$ we have 
\begin{align}
h(x) = - f(g(x)) 
&= \sqrt{\frac{\pi}{2}}g(x)^2e^{-\frac{g(x)^2}{2}} (1 + O(x^{-2})) \notag  \\
&= \sqrt{\frac{\pi}{2}}x^2 (1+O(x^{-2}))^2 e^{-\frac{x^2}{2}(1+\frac{1}{x^2} + O(x^{-4}))^2} (1 + O(x^{-2})) \notag \\
&=\sqrt{\frac{\pi}{2}}x^2 e^{-\frac{x^2}{2}} e^{-1+ O(x^{-2})} (1 + O(x^{-2}))   \notag \\
&= \frac1{e}\sqrt{\frac{\pi}{2}}x^2 e^{-\frac{x^2}{2}}(1 + O(x^{-2})).   \label{eq:asymp_h}
\end{align}
\end{proof}

\begin{remark}\label{rem:asymptotic}
With some more elaboration, \ref{item:h(x)_infty_main}  can be generalized to higher order expansions of any order
 \begin{equation} \label{eq:precise_h_infty0}
 h (x) = \frac{1}{e} \sqrt{\frac{\pi}{2}}x^2 e^{-\frac{x^2}{2} } \left(1 + \frac{a_2}{x^2} + \frac{a_4}{x^4} + \cdots + \frac{a_{2N}}{x^{2N}} + o(x^{-2N}) \right), \qquad x\to\infty,   
 \end{equation}
where the coefficients $a_{2n}$ are determined by the formula (in the sense of formal power series or asymptotic expansion)
\begin{align}
1 + \sum_{n=1}^\infty \frac{a_{2n}}{x^{2n}}
&= (\widetilde F^{-1})'(x) \exp\left[-\frac{1}{2} \left(\widetilde F^{-1}(x)^2 -x^2 -2\right)\right]   \label{eq:precise_h_infty} \\
&= \left( 1-  \sum_{n=1}^\infty \frac{(2n-1) \kappa_{2n}}{x^{2n}} \right) \exp\left[-\frac{1}{2x^2}  \left(1+ \sum_{n\ge2}\frac{\kappa_{2n}}{x^{2n-2}} \right)^2 -  \sum_{n\ge2}\frac{\kappa_{2n}}{x^{2n-2}}  \right]  \notag \\
&=  1 - \frac{5}{2x^2}  -\frac{43}{8x^4} - \frac{579}{16 x^6}  - \cdots.   \notag
\end{align} 
The proof is sketched below. We first refine \eqref{eq:asymp1} and \eqref{eq:asymp2}. 
From Lemma \ref{lem:asymptotic_Cauchy} we have $\widetilde F(z) = \FN(z) + o(z^{-2N+1})$ and $\widetilde F ' (z) = (\FN)'(z) + o(z^{-2N})$ as $z\to\infty$ with $z \in D_\epsilon$, where 
\[
\FN(x) = x - \sum_{n=1}^N \frac{b_{2n}}{x^{2n-1}} 
\]
for some constants $b_{2n} \in \R ~(n\in \N)$ (the Boolean cumulants of $N(0,1)$). Then we get the following refinement of \eqref{eq:asymp1}: 
\begin{align}
a(x,0) &= \frac{-\im[\widetilde G(x)]}{|\widetilde G(x)|^2} = \sqrt{\frac{\pi}{2}}x^2e^{-\frac{x^2}{2}} \left(\frac{\FN(x)}{x}  + o(x^{-2N})\right)^2, \qquad x\to\infty.   \label{eq:asymp3}
\end{align}
On the other hand, for \eqref{eq:asymp2}, we restrict the variables $(x,y)$ to the thin domain 
\[
J := \{x+ iy: x>0, 2 f_0(x) < y < -2f_0(x) \}. 
\]
The point is that $x+i f(x)$ is contained in $J$ for large $x$ thanks to the established \eqref{eq:asymp_f}. Then we can get the following refinement of  \eqref{eq:asymp2}: 
\begin{equation}
a_y(x,y)  = \re[\widetilde F ' (x + iy)]   = (\FN)'(x) + o(x^{-2N}), \qquad x\to \infty \quad \text{with} \quad x+iy \in J.   \label{eq:asymp4}
\end{equation}
The point is that no $y$'s appear in the main term above thanks to the exponential bounds for $y$.  

Then substituting $y=f(x)$ into $a(x,y) = a(x,0) + a_y (x,\theta y) y $ and combining it with \eqref{eq:asymp3} and \eqref{eq:asymp4} yield
\[
f(x) = - \sqrt{\frac{\pi}{2}}x^2e^{-\frac{x^2}{2}} \left( 1 + \frac{c_2}{x^2} + \frac{c_4}{x^4} + \cdots + \frac{c_{2N}}{x^{2N}} + o(x^{-2N}) \right), 
\]
where $c_2,c_4, \dots$ are determined by the equation (in the sense of asymptotic expansion) 
\[
 1+ \sum_{n=1}^\infty \frac{c_{2n}}{x^{2n}}  =  \frac{ \left( 1 - \sum_{n=1}^\infty \dfrac{b_{2n}}{x^{2n}}\right)^2}{1 + \sum_{n=1}^\infty \dfrac{(2n-1)b_{2n}}{x^{2n}}   },   
\]
the right hand side of which can be written as $\frac{\widetilde F(x)^2}{x^2 \widetilde F'(x)}$.  Finally computing $h(x) = -f(g(x))$ as in \eqref{eq:asymp_h} yields the desired formula  \eqref{eq:precise_h_infty0}. Note that we can write $g(x) = \widetilde F^{-1}(x)$ in the sense of asymptotic expansion, which is a key ingredient for deriving \eqref{eq:precise_h_infty}. 
\end{remark}

Next, we provide a proof of Theorem \ref{thm:fine_asymptotics_main} \ref{item:h(x)_zero_improved}. The proof needs estimates on the functions $f$, $g$ and $gh$, so we expand the statement of Theorem \ref{thm:fine_asymptotics_main} \ref{item:h(x)_zero_improved}.
The results also offer a better understanding of the boundary of $\Omega$; especially they justify that the curve $\{H(x): x>0\}$ approaches $\partial \Xi$ as $x\to0^+$ as observed  in  Figure \ref{fig1}. 

\begin{theorem}  \label{thm:main_infty} Let $0<\eta<1$ be fixed. The following asymptotic behavior holds. 

\begin{enumerate}[label=\rm(f${}_0$),leftmargin=1.2cm] 
\item\label{item:f(x)_zero_improved} 
$\displaystyle 
f(x) = -\frac{\pi}{2x }[1+ O(e^{-\frac{\pi^2 \eta}{8x^2}})] \quad \text{as} \quad x\to0^+. 
$ 
\end{enumerate}
\begin{enumerate}[label=\rm(g${}_0$),leftmargin=1.2cm] 
\item\label{item:g(x)_zero_improved} 
$\displaystyle 
g(x) =\sqrt{-\log \frac1{\sqrt{2\pi}\, x} + \sqrt{ \left(\log \frac1{\sqrt{2\pi}\, x}\right)^2+ \frac{\pi^2}{4} }} + O(x^\eta) \quad \text{as} \quad x\to0^+. 
$ 

In particular, $g(x) \sim \frac{\pi}{\sqrt{8\log \frac1{x}}}$. 
\end{enumerate}
\begin{enumerate}[label=\rm(gh${}_0$),leftmargin=1.2cm] 
\item\label{item:g(x)h(x)_zero} 
$\displaystyle 
g(x)h(x) = \frac{\pi}{2}\left[ 1 + O(x^\eta)\right]  \quad \text{as} \quad x\to0^+. 
$
\end{enumerate}
\begin{enumerate}[label=\rm(h${}_0$),leftmargin=1.2cm] 
\item\label{item:h(x)_zero_improved} 
$\displaystyle 
h(x) = \sqrt{\log \frac1{\sqrt{2\pi}\, x} + \sqrt{ \left(\log \frac1{\sqrt{2\pi}\, x}\right)^2+ \frac{\pi^2}{4} }} + O(x^\eta)  \quad \text{as} \quad x\to0^+. 
$

In particular, $h(x) \sim \sqrt{2\log \frac1{x}}$. 
\end{enumerate}
\end{theorem}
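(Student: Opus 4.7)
The plan is to solve the defining equation $\widetilde{G}(H(x)) = 1/x$ for small $x > 0$ directly via formula~\eqref{eq:CauchyND}, exploiting the fact that the exponential term $-i\sqrt{2\pi}\,e^{-z^2/2}$ blows up while $G(z)$ stays bounded (in fact $|G(z)| \le 1/|\im z|$) as $H(x)$ moves off to infinity. Writing $H(x) = g(x) - i h(x)$ with $g, h > 0$, a direct computation gives
\begin{equation*}
-i\sqrt{2\pi}\, e^{-H(x)^2/2} = \sqrt{2\pi}\, e^{(h^2 - g^2)/2}\bigl(\sin(gh) - i\cos(gh)\bigr),
\end{equation*}
so $\widetilde{G}(H(x)) = 1/x$ separates into the real pair
\begin{align*}
\sqrt{2\pi}\, e^{(h^2 - g^2)/2}\sin(gh) &= 1/x - \re G(H(x)), \\
\sqrt{2\pi}\, e^{(h^2 - g^2)/2}\cos(gh) &= \im G(H(x)).
\end{align*}
Proposition~\ref{prop:gh} guarantees $g(x)\to0$ and $h(x)\to\infty$ as $x\to0^+$, so $|H(x)|\to\infty$, while the confinement $H(x)\in \Xi\cap\C^-\cap\{\re(z)>0\}$ from Proposition~\ref{prop:curve} forces $gh\in(0,\pi/2)$.

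Dividing the two equations yields $\cot(gh) = O(x/h)$, which together with $gh\in(0,\pi/2)$ gives $gh = \pi/2 + O(x/h)$; since the bootstrap estimate $h\sim \sqrt{2\log(1/x)}$ makes $O(x/h) = o(x^\eta)$ for every $\eta<1$, this establishes $(\text{gh}_0)$. Taking the logarithm of the first equation, with $\log\sin(gh) = O((gh-\pi/2)^2)$ and $\log(1 - x\re G(H(x))) = O(x/h)$, yields
\begin{equation*}
h^2 - g^2 = 2\log\frac{1}{x\sqrt{2\pi}} + O(x/h).
\end{equation*}
Setting $L := 2\log\frac{1}{x\sqrt{2\pi}}$ and viewing $h^2$ as the positive root of the quadratic $t^2 - Lt - \pi^2/4 = 0$ modulo the $O(x/h)$ perturbations in both $h^2-g^2$ and $(gh)^2$, one obtains
\begin{equation*}
h(x)^2 = \frac{L + \sqrt{L^2 + \pi^2}}{2} + O(x/h), \qquad g(x)^2 = \frac{-L + \sqrt{L^2 + \pi^2}}{2} + O(x/h).
\end{equation*}
Taking square roots and dividing the perturbation by the leading order $\sqrt{L}$ delivers $(\text{h}_0)$ and $(\text{g}_0)$ with an error of order $O(x/(h\sqrt{L}))$, far better than the claimed $O(x^\eta)$.

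Finally, $(\text{f}_0)$ follows from $f(x) = -h(g^{-1}(x))$ combined with $(\text{gh}_0)$: setting $a := g^{-1}(x)$, one has $h(a) = \pi/(2g(a))[1 + O(a^\eta)] = (\pi/(2x))[1 + O(a^\eta)]$. Inverting the leading behavior $g(a)\sim \pi/\sqrt{8\log(1/a)}$ supplied by $(\text{g}_0)$ gives $a = O(e^{-\pi^2/(8x^2)})$, so $a^\eta = O(e^{-\pi^2\eta/(8x^2)})$, which yields $f(x) = -(\pi/(2x))\bigl[1 + O(e^{-\pi^2\eta/(8x^2)})\bigr]$. The main technical obstacle will be ensuring that $gh$ does not escape to an irrelevant branch $\pi/2 + k\pi$; this is precluded precisely by the a priori confinement $gh\in(0,\pi/2)$ coming from $H(x)\in\Xi$, which is also what makes $p_0^+$ well approximated by $\partial\Xi$ (the curve $gh = \pi/2$) near the negative imaginary axis, as illustrated in Figure~\ref{fig1}. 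The remaining work is routine error propagation through square-root extraction and the inversion of $g$.
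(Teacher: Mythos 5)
Your proposal is correct, and it takes a genuinely different route from the paper. The paper proves \ref{item:f(x)_zero_improved} first by a barrier (intermediate--value) argument on the vertical segment at abscissa $x$: it shows $\im[\widetilde G(x-\tfrac{\pi}{2x}(1-\epsilon)i)]<0$ with $\epsilon=e^{-\pi^2\eta/(8x^2)}$ while $\im[\widetilde G(x-\tfrac{\pi}{2x}i)]>0$ by Lemma \ref{cor:F}, squeezing $f(x)$; it then evaluates $y=\widetilde F(x+if(x))$, takes logarithms and uses the quadratic formula to get \ref{item:g(x)_zero_improved}, and only afterwards deduces \ref{item:g(x)h(x)_zero} and \ref{item:h(x)_zero_improved}. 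You instead split the single equation $\widetilde G(H(x))=1/x$ via \eqref{eq:CauchyND} into its real and imaginary parts, use $|G(H(x))|\le 1/h(x)$ together with the a priori confinement $g h\in(0,\pi/2)$ from $H(x)\in\Xi$ to get $gh=\tfrac{\pi}{2}+O(x/h)$ and $h^2-g^2=2\log\tfrac{1}{\sqrt{2\pi}\,x}+O(x/h)$, and solve the resulting quadratic system; \ref{item:f(x)_zero_improved} then comes last by inverting $g$. This reversal of logical order is legitimate (nothing is circular), and your error terms for \ref{item:g(x)h(x)_zero}, \ref{item:g(x)_zero_improved}, \ref{item:h(x)_zero_improved} come out as $O(x)$ or better, sharper than the stated $O(x^\eta)$ (note only that the square-root error propagation for $g$ divides by the small leading order $\asymp L^{-1/2}$, giving $O(x\sqrt{L}/h)=O(x)$ rather than the $O(x/(h\sqrt{L}))$ you quote for both). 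Two minor points to tidy up: the bound $O(x/h)=o(x)$ needs only $h\to\infty$ from Proposition \ref{prop:gh}, not the bootstrap $h\sim\sqrt{2\log(1/x)}$; and in \ref{item:f(x)_zero_improved}, from $\log(1/a)=\tfrac{\pi^2}{8x^2}(1+o(1))$ you may only conclude $a^{\eta''}=O(e^{-\pi^2\eta/(8x^2)})$ after choosing some $\eta''\in(\eta,1)$ and running the argument with $\eta''$ --- the literal claim $a=O(e^{-\pi^2/(8x^2)})$ is slightly too strong. This is the same slack the paper builds in via its auxiliary $\eta'$, and since $\eta$ is arbitrary it is a routine fix.
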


\begin{proof} For the proof it is worth noting that 
\[
\Xi \cap \C^-  \cap \{\re(z) >0\} =  \left\{x+ iy: x >0,  - \frac{\pi}{2x} < y <0 \right\}.   
\] 

\begin{enumerate}[leftmargin=1cm]

\item[\ref{item:f(x)_zero_improved}] Recall from Proposition \ref{prop:curve} that, for $x>0$, $f(x) = -h \circ g^{-1} (x)$ is a unique solution $y \in (-\frac{\pi}{2x},0)$ to $\im [\widetilde{G}(x+iy)]=0$.  We set $\epsilon = \epsilon(x) =e^{-\frac{\pi^2 \eta}{8x^2}}$. Formula \eqref{eq:CauchyND} implies that, for $x>0$, 
\begin{align*}
\im&\left[ \widetilde{G}\left(x-\frac{\pi}{2x} (1-\epsilon) i \right) \right]\\
&= \im\left[G\left(x-\frac{\pi}{2x}(1-\epsilon) i\right) - 2\pi i  \frac{1}{\sqrt{2\pi}} e^{-\frac{1}{2}(x^2-\frac{\pi^2}{4x^2}(1-\epsilon)^2)} e^{\frac{\pi}{2}(1-\epsilon) i} \right] \\
&= \im \left[G\left(x-\frac{\pi}{2x}(1-\epsilon) i\right)\right]-\sqrt{2\pi} e^{-\frac{1}{2}(x^2-\frac{\pi^2}{4x^2}(1-\epsilon)^2)} \sin \frac{\pi\epsilon}{2}
\end{align*}
and therefore, as $x\to 0^+$, 
\begin{align*}
\im \left[\widetilde{G}\left(x-\frac{\pi}{2x}(1-\epsilon) i \right) \right] 
 &= O(x) - (1+o(1))  \frac{\sqrt{2\pi^3}}{2} \epsilon e^{\frac{\pi^2}{8x^2}(1-\epsilon)^2} \\
 &=  o(1) - (1+o(1))  \frac{\sqrt{2\pi^3}}{2} e^{\frac{\pi^2}{8x^2}(1-\eta+o(1))}. 
\end{align*}
In particular, $\im \left[\widetilde{G}\left(x-\frac{\pi}{2x}(1-\epsilon) i \right) \right] <0$ for sufficiently small $x>0$. On the other hand, recall from Lemma \ref{cor:F} that $\im \left[\widetilde{G}\left(x-\frac{\pi}{2x} i \right) \right] >0$. Therefore, $-\frac{\pi}{2x} < f(x) < -\frac{\pi}{2x}(1-\epsilon)$ for sufficiently small $x>0$; in particular
\ref{item:f(x)_zero_improved} holds. 

\item[\ref{item:g(x)_zero_improved}]  We begin with estimating 
\begin{align*}
 \widetilde{G}\left(x+ if(x) \right) 
&= G\left(x+ if(x)\right) - 2\pi i  \frac{1}{\sqrt{2\pi}} e^{-\frac{1}{2}[x^2-f(x)^2]} e^{-i x f(x)}  \\
&= o(1) -i  \sqrt{2\pi}e^{-\frac1{2}x^2}  e^{\frac{\pi^2}{8x^2}[1+ O(e^{-\frac{\pi^2 \eta}{8x^2}})]} e^{\frac{i\pi }{2}[1+O(e^{-\frac{\pi^2 \eta}{8x^2}}) ]}  \\
&= \sqrt{2\pi} (1+O(e^{-\frac{\pi^2 \eta'}{8x^2}}))e^{-\frac1{2}x^2} e^{\frac{\pi^2}{8x^2}},   \qquad x \to0^+, 
\end{align*}
for any  $\eta'  \in (0,\eta)$. This yields 
\begin{equation}\label{eq:H(y)}
y:= \widetilde{F}\left(x+ if(x) \right)  = \frac1{\sqrt{2\pi}} (1+O(e^{-\frac{\pi^2 \eta'}{8x^2}}))e^{\frac1{2}x^2} e^{- \frac{\pi^2}{8x^2}}. 
\end{equation}
Note that $y\to 0^+$ as $x\to0^+$ and $H(y) = x + if(x)$, so that $g(y)=x$. Taking the logarithm of \eqref{eq:H(y)} we obtain $x^4 -2x^2 \log (\sqrt{2\pi}\, y) -\frac{\pi^2}{4}+ O(e^{-\frac{\pi^2 \eta'}{8x^2}})=0$ and hence (by the quadratic formula)
\begin{equation} \label{eq:g(y)^2}
x^2 = \log (\sqrt{2\pi}\, y) \pm \sqrt{ (\log (\sqrt{2\pi}\, y) )^2 
+ \frac{\pi^2}{4} +   O(e^{-\frac{\pi^2 \eta'}{8x^2}})}. 
\end{equation}
The indefinite sign above is actually $+$ because $x\to 0^+$ as $y\to0^+$; then,   
in particular, $x^2 \sim \frac1{8}\cdot \frac{\pi^2}{\log \frac{1}{y}}$. This implies $O(e^{-\frac{\pi^2 \eta'}{8x^2}}) = O(y^{\eta'+o(1)})$ and hence 
\begin{align}
\sqrt{ (\log (\sqrt{2\pi}\, y) )^2 + \frac{\pi^2}{4} +   O(e^{-\frac{\pi^2 \eta'}{8x^2}})} 
&= \sqrt{ (\log (\sqrt{2\pi}\, y) )^2   \notag
+ \frac{\pi^2}{4}} \sqrt{1 +   \frac{O(y^{\eta'+o(1)}) }{(\log(\sqrt{2\pi}\, y) )^2 
+ \frac{\pi^2}{4}  } }   \notag  \\
&=   (1+O(y^{\eta'+o(1)}(\log y)^{-2}))\sqrt{ (\log (\sqrt{2\pi}\, y) )^2 
+ \frac{\pi^2}{4}}   \notag \\
&=  \sqrt{ (\log(\sqrt{2\pi}\, y) )^2 
+ \frac{\pi^2}{4}}  +O(y^{\eta' +o(1)}).   \label{eq:landau}
\end{align}

Combining \eqref{eq:g(y)^2} and \eqref{eq:landau}, taking the square root and similarly handling the Landau symbols yields the desired \ref{item:g(x)_zero_improved}.  (Recall that $0< \eta' < \eta<1$ were arbitrary.)

\item[\ref{item:g(x)h(x)_zero}] Substituting $g(x) \sim \frac{\pi}{\sqrt{8 \log 1/x}} ~(x\to0^+)$ into \ref{item:f(x)_zero_improved},  we get 
\begin{align}
g(x) h(x) &= -g(x) f(g(x)) = \frac{\pi}{2} \left[1+ O\left(e^{-\frac{\pi^2 \eta}{8g(x)^2}   }\right) \right]  = \frac{\pi}{2}\left[ 1 + O(x^{\eta+o(1)})\right]. 
\end{align}
This finishes the proof of \ref{item:g(x)h(x)_zero} since $0<\eta <1$ was arbitrary. 

\item[\ref{item:h(x)_zero_improved}] One only needs to combine \ref{item:g(x)_zero_improved}  and \ref{item:g(x)h(x)_zero}. 

\end{enumerate}
\vspace{-7mm}
\end{proof}

\begin{appendix}

\section{Proofs of basic asymptotic expansions} \label{appendix}

\begin{proof}[Proof of Lemma \ref{lem:asymptotic_Cauchy} (continued)]
By taking the derivatives of \eqref{eq:another_rep} with respect to $z$, similar formulas hold for moments: 
\begin{equation}\label{eq:moments}
\int_{D_\epsilon}  w^n   \frac{1}{\sqrt{2\pi}} e^{-\frac{w^2}{2}}\,\dd w 
= \int_{\R}  x^n   \frac{1}{\sqrt{2\pi}} e^{-\frac{x^2}{2}}\,\dd x =m_n, \qquad n \in \N \cup \{0\}. 
\end{equation}
Since the right hand side of \eqref{eq:another_rep} is analytic in $D_\epsilon$, the identity theorem yields that
\begin{equation*} 
\widetilde{G}(z)=\int_{\partial D_\epsilon} \frac{1}{z-w}\frac{1}{\sqrt{2\pi}} e^{-\frac{w^2}{2}}\,\dd w, \qquad z\in D_\epsilon. 
\end{equation*}

Take $0< \eta <\epsilon< \pi/4$ here. The transform $\widetilde{G}$ obviously has the following representation:
\begin{equation}\label{eq:cauchy2}
\widetilde{G}(z)=\int_{\partial D_{\eta}} \frac{1}{z-w} \cdot \frac{1}{\sqrt{2\pi}} e^{-\frac{w^2}{2}}\,\dd x, \qquad z\in D_\epsilon.
\end{equation}
Combining \eqref{eq:cauchy2} and \eqref{eq:moments} (the latter for $\eta$ instead of $\epsilon$) and the elementary identity
\begin{equation} \label{eq:geometric_series}
\frac{1}{z-w} - \sum_{k=0}^{2N-1} \frac{w^k}{z^{k+1}} = \frac{w^{2N}}{z^{2N}(z-w)}, 
\end{equation}
 we obtain 
\begin{align}
z^{2N+1}\left( \widetilde{G}(z)-\sum_{k=0}^{2N-1} \frac{m_k}{z^{k+1}}\right) 
&=z^{2N+1} \int_{\partial D_{\eta}}\left(   \frac{1}{z-w} -\sum_{k=0}^{2N-1} \frac{w^{k}}{z^{k+1}}\right) \frac{1}{\sqrt{2\pi}} e^{-\frac{w^2}{2}}\,\dd w \notag \\
&= \int_{\partial D_{\eta}} \frac{w^{2N}z}{z-w} \cdot \frac{1}{\sqrt{2\pi}} e^{-\frac{w^2}{2}}\,\dd w, \qquad z \in D_{\epsilon}.\label{eq:Cauchy2}
\end{align}
We here observe that for $w= re^{i(-\frac{\pi}{4}+\eta)} \in \partial D_{\eta}$
\begin{equation}\label{eq:geometric}
\sup_{z\in D_\epsilon}\left| \frac{z}{z-w} \right| \le 1+  \sup_{z\in D_\epsilon}\left|\frac{w}{z-w}\right| = 1+ \frac{1}{\sin (\epsilon - \eta)},
\end{equation}
so that, by the Lebesgue convergence theorem,  \eqref{eq:Cauchy2} converges to the finite number $m_{2N}$ as $z \to \infty$.  This completes the proof of \eqref{eq:asymptotic_Cauchy1}. 

The asymptotic expansion for $\widetilde{G}' (z)$ can be proved very similarly; one needs to use
\[
\widetilde{G}'(z)=-\int_{\partial D_\eta} \frac{1}{(z-w)^2} \cdot \frac{1}{\sqrt{2\pi}}e^{-\frac{w^2}{2}} \,\dd w, \qquad z\in D_\epsilon 
\]
and the $z$-differentiated  version  of formula \eqref{eq:geometric_series}. 
\end{proof}

\begin{proof}[Proof of Lemma \ref{lem:F^{-1}(z)_infty}]
We verify the formula for $N=2$ which should well explain how to handle the general $N$.  Let $0 < \epsilon < \epsilon' < \pi/4$ be fixed. 

A straightforward calculation translates Lemma \ref{lem:asymptotic_Cauchy} into the asymptotic expansion of $\widetilde{F}$ 
\begin{equation}\label{eq:asymp1b}
\widetilde{F}(z) = z - \frac1{z} - \frac{2}{z^3} + O\left( \frac1{z^5} \right), \qquad z \to\infty, \ z \in D_\epsilon. 
\end{equation} 
As the inverse function of $\widetilde{F}(z) = z(1+o(1))$, we obtain $\widetilde{F}^{-1}(w) = w(1+o(1))$. Plugging $\widetilde{F}^{-1}(w)$ into \eqref{eq:asymp1b} yields 
\[
w = \widetilde{F}( \widetilde{F}^{-1}(w)) = \widetilde{F}^{-1}(w)  - \frac1{w(1+o(1))} - \frac{2}{w^3(1+o(1))} + O\left( \frac1{w^5} \right), 
\]
which can be simplified to 
\begin{equation}\label{eq:asymp2b}
\widetilde{F}^{-1}(w) = w + \frac{1}{w} + o\left( \frac1{w} \right). 
\end{equation}
We then plug the refined asymptotics \eqref{eq:asymp2b} into \eqref{eq:asymp1b} to get 
\begin{align}
w &=  \widetilde{F}^{-1}(w)  - \frac1{w + \frac1{w} +o\left(\frac1{w}\right)} - \frac{2}{w^3(1+o(1))} + O\left( \frac1{w^5} \right) \\
&= \widetilde{F}^{-1}(w)  - \frac1{w} - \frac1{w^3} + o\left( \frac1{w^3} \right),  
\end{align}
and hence 
\[
\widetilde{F}^{-1}(w) = w + \frac{1}{w} +  \frac1{w^3}+o\left( \frac1{w^3} \right). 
\]
Higher order expansions can be computed similarly by induction on $N$:  substituting \eqref{eq:asymptotic_F^{-1}} for $N$ into $\widetilde F(z) = z - \sum_{n=1}^{N+1} \frac{b_{2n}}{z^{2n-1}} +o(\frac{1}{z^{2N+1}})$ yields the asymptotic formula \eqref{eq:asymptotic_F^{-1}} for $N+1$. 
From the procedure, the coefficients of $\widetilde F^{-1}$ are obtained as the coefficients of the formal inverse Laurent series of $\widetilde F(z)$. These coefficients are known to be the free cumulants, see e.g.\ \cite[Theorem 12.5]{NS06}. 
\end{proof}

\end{appendix}

\section*{Acknowledgements}
The inserted figures are drawn on Mathematica Version 12.1.1, Wolfram Research, Inc., Champaign, IL. 
This research is supported by JSPS Open Partnership Joint Research Projects Grant Number JPJSBP120209921. Moreover, T. Hasebe was supported by JSPS Grant-in-Aid for Young Scientists 19K14546. Y. Ueda was supported by JSPS Grant-in-Aid for Scientific Research (B) 19H01791 and JSPS Grant-in-Aid for Young Scientists 22K13925.

\vspace{4mm}

\begin{enumerate}
\item[]
\hspace{-9mm}Takahiro Hasebe\\
Department of Mathematics, Hokkaido University.\\
North 10 West 8, Kita-ku, Sapporo 060-0810, Japan.\\
Email address: thasebe@math.sci.hokudai.ac.jp

\item[] 
\hspace{-9mm}Yuki Ueda\\
Department of Mathematics, Hokkaido University of Education. \\
Hokumon-cho 9, Asahikawa, Hokkaido, 070-8621, Japan.\\
Email address: ueda.yuki@a.hokkyodai.ac.jp
\end{enumerate}

\end{document}